\documentclass[11pt,reqno]{amsart}
\usepackage{amsmath,amssymb,amsfonts,amsthm}
\usepackage[margin=1.15in]{geometry}
\usepackage[numbers,sort&compress]{natbib}
\usepackage{hyperref}
\hypersetup{hidelinks,pdftitle={K-area bounds for smooth sphere bundles},pdfauthor={Haoxuan Cheng}}
\newtheorem{theorem}{Theorem}[section]
\newtheorem{corollary}[theorem]{Corollary}
\newtheorem{lemma}[theorem]{Lemma}
\newtheorem{proposition}[theorem]{Proposition}
\theoremstyle{definition}
\newtheorem{definition}[theorem]{Definition}
\newcommand{\R}{\mathbb R}
\newcommand{\C}{\mathbb C}
\newcommand{\Z}{\mathbb Z}
\newcommand{\Karea}{\operatorname{K\text{-}area}}
\newcommand{\ch}{\operatorname{ch}}
\newcommand{\rk}{\operatorname{rk}}
\newcommand{\op}{\mathrm{op}}
\newcommand{\verti}{\mathrm{vert}}
\newcommand{\Spin}{\operatorname{Spin}}
\newcommand{\ev}{\operatorname{ev}}
\newcommand{\dd}{\mathrm d}
\title[K-area bounds for smooth sphere bundles]{K-area bounds for smooth sphere bundles}
\author{Haoxuan Cheng}
\address{School of Mathematical Sciences, Fudan University, Shanghai 200433, China}
\email{hxcheng25@m.fudan.edu.cn}
\date{}

\begin{document}
\raggedbottom
\begin{abstract}
Let $E\to B$ be a smooth $S^n$ bundle with oriented Riemannian total
space $(E,g)$, where $n\ge2$. We prove
$\Karea(E,g)\le4\pi n(n-1)/\sigma$ whenever every fibre has intrinsic
scalar curvature at least $\sigma>0$, independently of the dimension
of $B$. In particular, unit round fibres give $\Karea(E,g)\le4\pi$.
For arbitrary metrics on two-sphere fibres, K-area is bounded by the
supremum of their areas. We also obtain stronger bounds from
fibrewise scalar spectra.
\end{abstract}
\maketitle

\section{Introduction and statements}\label{sec:intro}

K-area relates the curvature of vector bundles to the topology of the
underlying manifold. We bound the K-area of a sphere bundle's total
space in terms of the intrinsic geometry of its fibres.

Throughout the paper, manifolds are smooth, finite-dimensional, Hausdorff,
second countable, and without boundary, unless a boundary is explicitly
introduced in a construction. Let $\pi:E\to B$ be a nonempty smooth
locally trivial $S^n$ bundle, where $n\ge2$. Write $E_b=\pi^{-1}(b)$ and
$T_\pi E=\ker\dd\pi$. The total space $E$ is oriented and carries a smooth
Riemannian metric $g$. The base and the vertical tangent bundle need not
be oriented. The induced metric on $E_b$ is the restriction of $g$ to
$TE_b$; no submersion condition is imposed on $g$.

We first specify the K-area convention. For a finite-rank Hermitian complex
vector bundle $V\to E$ with unitary connection $\nabla$, its curvature is
the endomorphism-valued two-form
\[
 F_\nabla(X,Y)=\nabla_X\nabla_Y-\nabla_Y\nabla_X-\nabla_{[X,Y]}.
\]
Here $X,Y$ are vector fields on $E$, and the displayed operators act on
smooth sections of $V$.
Define its operator comass by
\begin{equation}\label{eq:comass}
 \|F_\nabla\|_g
 =\sup_{x\in E}\ 
   \sup_{\substack{u,v\in T_xE\\|u|_g=|v|_g=1,\ g(u,v)=0}}
       \|F_\nabla(u,v)\|_\op.
\end{equation}
The norm inside the supremum is the operator norm on $V_x$ defined by its
Hermitian metric. We use the Chern--Weil conventions
\[
 c(V,\nabla)=\det\left(I+\frac{i}{2\pi}F_\nabla\right),
 \qquad
 \ch(V,\nabla)=\operatorname{tr}\exp\left(\frac{i}{2\pi}F_\nabla\right).
\]
Write $\ch_j(V)$ for the component of real degree $2j$.
For background on characteristic classes and their differential-form
representatives, see \cite[\S\S14, 16 and Appendix~C]{MilnorStasheff1974}.

\begin{definition}\label{def:karea}
Suppose $\dim E=2m>0$. A pair $(V,\nabla)$ is admissible if at least one
number
\begin{equation}\label{eq:chern-number}
 \int_E c_{i_1}(V,\nabla)\wedge\cdots\wedge c_{i_\ell}(V,\nabla),
 \qquad i_j\ge1,\quad i_1+\cdots+i_\ell=m,
\end{equation}
is nonzero. If $E$ is noncompact, we also require a compact set $C\subset E$
and a global unitary frame $\tau$ on $E\setminus C$ in which $\nabla=\dd$.
The forms in \eqref{eq:chern-number} then have compact support. Set
\[
 \Karea(E,g)=\sup_{(V,\nabla)\ \mathrm{admissible}}
                 \frac{2\pi}{\|F_\nabla\|_g},
\]
with supremum zero if there are no admissible pairs. For odd-dimensional
$E$, set
\begin{equation}\label{eq:odd}
 \Karea(E,g)=\Karea(E\times\R,g+\dd t^2).
\end{equation}
\end{definition}

This is the normalization and real-line convention in
\cite[p.~31]{Gromov2017}. In the noncompact case we use the stated
parallel-frame meaning of trivial flatness at infinity, rather than the
weaker condition that curvature merely vanish outside a compact set.
The frame may be any frame satisfying that condition; it is not required
to extend over $E$. Formula~\eqref{eq:odd} uses all admissible bundles on
$E\times\R$, not only bundles pulled back from $E$.

For the spectral estimate, write
$h_b=g|_{E_b}$, let $\Delta_{h_b}=\dd^*\dd$ be the nonnegative
Laplacian on functions, and let $\operatorname{Sc}_{h_b}$ be the
intrinsic scalar curvature. Define
\begin{equation}\label{eq:scalar-spectrum}
 \lambda(b)=\lambda_{\min}
       \left(\Delta_{h_b}+\frac{\operatorname{Sc}_{h_b}}4\right).
\end{equation}
The lowest eigenvalue exists because each fibre is closed.

\begin{theorem}\label{thm:spectral}
Let $\pi:E\to B$ be a smooth $S^n$ bundle with oriented total space,
where $n\ge2$, and let $g$ be a smooth Riemannian metric on $E$.
If $\lambda(b)\ge\lambda_0>0$ for every $b\in B$, then
\begin{equation}\label{eq:spectral-bound}
 \Karea(E,g)\le\frac{\pi n(n-1)}{\lambda_0}.
\end{equation}
In particular, if $\operatorname{Sc}_{h_b}\ge\sigma>0$ on every
fibre, then
\begin{equation}\label{eq:scalar-bound}
 \Karea(E,g)\le\frac{4\pi n(n-1)}{\sigma}.
\end{equation}
\end{theorem}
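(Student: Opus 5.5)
The plan is to argue by contradiction via a fibrewise index argument: show that a connection with small curvature forces $V$ to be, rationally, pulled back from $B$, which kills every Chern number. Suppose $(V,\nabla)$ is admissible with $\|F_\nabla\|_g<2\lambda_0/(n(n-1))$; I aim to show all numbers \eqref{eq:chern-number} vanish. This gives $2\pi/\|F_\nabla\|_g\le \pi n(n-1)/\lambda_0$. The scalar bound \eqref{eq:scalar-bound} then follows at once, since $\lambda(b)\ge \min \operatorname{Sc}_{h_b}/4\ge\sigma/4$.

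\textbf{Fibrewise operator and its invertibility.} Assume first $n$ even and $E$ compact of even dimension. Each fibre $E_b\cong S^n$ is spin with a unique spin structure, so the vertical spinor bundles $S_b$ exist. They assemble into a bundle over $E$, possibly after passing to the orientation double cover of $T_\pi E$. For the rational statements needed below this cover is harmless: pull back, argue there, and push forward. Consider the family of twisted Dirac operators $D_b^V$ on $S_b\otimes V|_{E_b}$, built from the intrinsic Levi-Civita connection of $h_b$ and $\nabla|_{E_b}$; no submersion hypothesis is needed for this. The Lichnerowicz formula gives
\[
 (D_b^V)^2=\nabla^*\nabla+\tfrac{\operatorname{Sc}_{h_b}}4+\mathcal R^V,
 \qquad \mathcal R^V=\tfrac12\sum_{i<j}e_ie_j\otimes F_\nabla(e_i,e_j).
\]
Since $\|e_ie_j\|=1$, this yields $\|\mathcal R^V\|\le \tfrac{n(n-1)}2\|F_\nabla\|_g$ pointwise. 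Kato's inequality $|\nabla\psi|\ge|\dd|\psi||$ then gives
\[
\|D_b^V\psi\|^2\ge \langle(\Delta_{h_b}+\operatorname{Sc}_{h_b}/4)|\psi|,|\psi|\rangle-\tfrac{n(n-1)}2\|F\|\,\||\psi|\|^2>0 .
\]
Hence every $D_b^V$ is invertible, and the family index $\operatorname{ind}(D^V)\in K^0(B)$ vanishes. By the families index theorem, $\pi_!([V])=0$ rationally, where $\pi_!:K^*(E)\to K^{*-n}(B)$ is the Gysin (pushforward) map for the vertically spin$^c$ fibration.

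\textbf{Topological step.} For a sphere bundle there is a rational Gysin exact sequence $K^*(B)\xrightarrow{\pi^*}K^*(E)\xrightarrow{\pi_!}K^{*-n}(B)$; this is the $K$-theoretic counterpart of the cohomological Gysin sequence via the Chern character. Since $\pi_![V]=0$ in $K\otimes\mathbb Q$, we get $[V]=\pi^*w$ rationally. So every Chern class of $V$ is rationally a pullback from $H^*(B;\mathbb Q)$. Any product of total degree $2m=\dim E>\dim B$ is then zero in $H^{2m}(E;\mathbb Q)$, so all Chern numbers vanish, contradicting admissibility.

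\textbf{Remaining cases.} For noncompact $E$, run the same argument with compactly supported $K$-theory. The parallel frame at infinity makes $[V]-[\C^r]$ a class in $K_c(E)$, and the fibrewise operators agree with untwisted ones near infinity. Invertibility of the untwisted ones comes from the same estimate with $F=0$, so the index is a compactly supported class that vanishes. For odd $\dim E$, apply the result to $E\times\R\to B\times\R$, which is again an $S^n$ bundle with the same fibre metrics, using \eqref{eq:odd}. For odd $n$, replace $E$ by $E\times\R\to B$ and view the fibres as $S^n\times\R$; alternatively, suspend the fibre by taking the product with a large round circle and use the $\Z/2$-graded or $K^1$ version of the index. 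Of these, the odd-$n$ case is where I would take most care.

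\textbf{Main obstacle.} The step I expect to be hardest is the topological one, together with its bookkeeping: the orientation double cover of $T_\pi E$, exactness of the rational Gysin sequence in compactly supported $K$-theory for noncompact $B$, and odd $n$. I would first try to phrase everything cohomologically. Vanishing of the index form says $\pi_*(\hat A(T_\pi E)\ch(V))=0$ in $H^*_c(B;\mathbb Q)$. Since $\hat A(T_\pi E)$ is invertible, one would like to infer that $\ch(V)$ lies in $\pi^*H^*(B)$ modulo the Euler-class relation. The catch is that this requires the twisted versions $V\otimes\pi^*W$ as well, and the curvature of these is still controlled because one may take $W$ flat, or, after a rescaling argument, nearly flat.
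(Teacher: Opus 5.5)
You correctly reduce the theorem to a vanishing statement, and your analytic step (Lichnerowicz plus Kato, giving invertibility below $2\lambda_0/[n(n-1)]$) agrees with the paper. One small slip: the twisting term is $\sum_{i<j}$ without the $\tfrac12$, but your bound $\tfrac{n(n-1)}2\|F_\nabla\|_g$ is the right one. However, two steps fail as stated.

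\textbf{The vertical spinor bundle need not exist.} Uniqueness of the spin structure on each fibre fixes the fibrewise spinor bundles only up to the automorphisms $\pm1$. These signs form a \v{C}ech class $a_2\in H^2(B;\Z/2)$ with $w_2(T_\pi E)=\pi^*a_2$, and passing to the orientation cover does not remove it. For example, take $E=S(W)$ for an oriented rank-$(n+1)$ vector bundle $W$. Then $T_\pi E\oplus\R\cong\pi^*W$, and the Gysin sequence makes $\pi^*$ injective on $H^2(\,\cdot\,;\Z/2)$, and for $n\ge3$ also on $H^3(\,\cdot\,;\Z)$. So $T_\pi E$ is not spin when $w_2(W)\ne0$, and for $n\ge3$ it is not even $\Spin^c$ when $\beta w_2(W)\ne0$. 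Your Dirac family is then undefined. What the estimate really needs is a vertical $\Spin^c$ structure with determinant $\pi^*L_0$ and a pulled-back connection, so that the determinant curvature vanishes on vertical planes. When $\beta(a_2)\ne0$, such a structure exists only after a base change. The paper pulls back along the projective bundle $\rho:\mathcal F\to B$ of a $\operatorname{PU}(r)$ bundle realizing $\beta(a_2)$, where $\rho^*a_2$ lifts integrally. It then uses $\rho_!(e(T_\rho\mathcal F)\smile\rho^*z)=rz$ to get injectivity of $\rho^*$ on $H^*(B;\R)$. The new fibres are isometric to the old ones, so the spectral bound and vertical curvature norm are unchanged.

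\textbf{The topological step is unjustified, and your proposed repair does not work.} For a smooth sphere bundle without linear structure group, no vector-bundle Thom isomorphism yields a $K$-theoretic Gysin sequence with the index pushforward. The Chern character also does not transport the cohomological sequence, since $\ch(\pi_!x)=\pi_!(\mathcal A\,\ch x)$ with $\mathcal A=\widehat A(T_\pi E)$ (times $e^{c_1(L)/2}$ in the $\Spin^c$ case). Indeed $\pi_!\pi^*c=\pi_!(\mathcal A)\,c$, so your sequence is not even a complex unless the untwisted index vanishes. Invertibility of $\mathcal A$ in $H^*(E;\R)$ does not let you move it outside $\pi_!$. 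Twisting by $\pi^*W$ adds nothing, since $\pi_!(\mathcal A\,\ch V\,\pi^*\ch W)=\pi_!(\mathcal A\,\ch V)\ch W$.

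The missing idea is to apply the family index theorem also to the untwisted family, which your estimate with $F=0$ already makes invertible. This gives $\pi_!\mathcal A=0$. The cohomological Gysin sequence then gives $\mathcal A=\pi^*a$ with $a_0=1$, so $a$ is invertible because its positive-degree part is nilpotent. Hence $0=\pi_!(\pi^*a\,\ch V)=a\,\pi_!\ch V$ forces $\pi_!\ch V=0$. The Gysin sequence and Newton's identities then make every $c_i(V)$ a pullback from $B$.

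This cohomological formulation also covers odd $n$ directly: the odd family index, or Liu's eta forms as in the paper, gives the odd-degree vanishing, so no suspension of the fibre is needed. For noncompact $E$, your compactly supported route would need a relative family index theorem and the base change over a possibly infinite-type base. The paper instead doubles a compact piece $E|_W$ and glues a flat trivial bundle on the second copy along $\tau$. This preserves the fibre metrics, both curvature norms and every detecting integral, reducing to the closed case.
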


\begin{corollary}\label{thm:round}
Let $\pi:E\to B$ be a smooth $S^n$ bundle with oriented total space,
where $n\ge2$, and let $g$ be a smooth Riemannian metric on $E$.
If each $(E_b,g|_{E_b})$ is isometric to the unit round
sphere, then
\begin{equation}\label{eq:round-bound}
 \Karea(E,g)\le4\pi.
\end{equation}
\end{corollary}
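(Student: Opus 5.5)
The corollary follows directly from Theorem~\ref{thm:spectral}, specifically from the scalar-curvature bound \eqref{eq:scalar-bound}. We only need the value of $\sigma$ for the unit round sphere and a check that the hypotheses of the theorem are met.

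Suppose every fibre $(E_b,h_b)$ with $h_b=g|_{E_b}$ is isometric to the unit round $S^n$. Scalar curvature is an intrinsic isometry invariant. The unit round $S^n$ has constant sectional curvature $1$, so its Ricci curvature is $(n-1)h_b$ and its scalar curvature is $\operatorname{Sc}_{h_b}\equiv n(n-1)$. Since $n\ge2$, this constant is positive. Hence the hypothesis $\operatorname{Sc}_{h_b}\ge\sigma>0$ holds on every fibre with $\sigma=n(n-1)$. The remaining hypotheses of Theorem~\ref{thm:spectral} are part of the hypotheses of the corollary: $E\to B$ is a smooth $S^n$ bundle with $n\ge2$, $E$ is oriented, and $g$ is smooth.

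Substituting $\sigma=n(n-1)$ into \eqref{eq:scalar-bound} gives
\[
 \Karea(E,g)\le\frac{4\pi n(n-1)}{n(n-1)}=4\pi,
\]
which is \eqref{eq:round-bound}.

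Alternatively, one can use \eqref{eq:spectral-bound} directly. Since $\Delta_{h_b}\ge0$ and $\operatorname{Sc}_{h_b}/4=n(n-1)/4$ is constant, we get $\lambda(b)=n(n-1)/4$, attained by the constant functions. Then $\pi n(n-1)/\lambda_0=4\pi$, so both routes give the same bound. No step here is a genuine obstacle, since all the analytic content lies in Theorem~\ref{thm:spectral}. The one point to get right is the normalization of the scalar curvature: with the convention that $\operatorname{Sc}$ is the full trace of Ricci, $S^2$ has $\operatorname{Sc}=2$, and the value $n(n-1)$ is consistent with that convention.
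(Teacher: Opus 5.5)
Your proposal is correct and is essentially the paper's own proof. The paper uses your ``alternative'' route: constants minimize $\Delta+\operatorname{Sc}/4$, so $\lambda(b)=n(n-1)/4$, and this is substituted into Theorem~\ref{thm:spectral}. Your primary route via the scalar bound with $\sigma=n(n-1)$ is equivalent, since the paper derives that bound from the spectral one through the Rayleigh-quotient estimate $\lambda(b)\ge\sigma/4$.
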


\begin{proof}
On a unit round $S^n$, scalar curvature is $n(n-1)$ and constant
functions minimize $\Delta+\operatorname{Sc}/4$. Thus
$\lambda(b)=n(n-1)/4$ for every fibre. Substitution in
Theorem~\ref{thm:spectral} gives $\Karea(E,g)\le4\pi$.
\end{proof}

For two-sphere fibres the area gives a bound without a curvature assumption.

\begin{theorem}\label{thm:area}
Let $\pi:E\to B$ be a smooth $S^2$ bundle with oriented total space, and
let $g$ be any smooth Riemannian metric on $E$. Then
\begin{equation}\label{eq:area-bound}
 \Karea(E,g)\le A,
 \qquad A=\sup_{b\in B}\operatorname{Area}_g(E_b).
\end{equation}
The right-hand side is allowed to be $+\infty$.
\end{theorem}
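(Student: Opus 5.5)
We derive Theorem~\ref{thm:area} from Theorem~\ref{thm:spectral} with $n=2$, where the bound reads $\Karea(E,g)\le 2\pi/\lambda_0$. Suppose $A<\infty$; otherwise there is nothing to prove. The plan is to conformally rescale the metric only in the fibre directions, so that each fibre becomes a round sphere of the same area.

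\textbf{Step 1: uniformize each fibre.} By uniformization, each $(E_b,h_b)$ is conformal to a round sphere. For each fibre there is a conformal factor, unique up to the Möbius group, making $h_b$ round of area $\operatorname{Area}(E_b)$. The difficulty is that Theorem~\ref{thm:spectral} needs an honest smooth metric on $E$. Conformal changes of $g$ alone would also rescale horizontal directions, which would change K-area. So we do not change $g$ directly.

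\textbf{Step 2: compare through an area-decreasing map.} Instead we use monotonicity. If $f:(E,g)\to(E,g')$ is a diffeomorphism, or more generally a proper degree-one map, with $|\Lambda^2 df|\le 1$, then pulling back admissible bundles gives $\Karea(E,g)\le\Karea(E,g')$. For a two-form, the comass of $f^*F$ is at most the comass of $F$ times the norm of $\Lambda^2 df$.

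We aim to build a metric $g'\ge g$ on two-vectors with $\lambda'(b)$ large. The natural route is via $\lambda$ itself. On a surface, $\Delta+K/2$ has lowest eigenvalue $\lambda(b)$ with $\lambda(b)\cdot\operatorname{Area}(E_b)\le 2\pi$ by Gauss--Bonnet, using the test function $1$. This is the wrong direction, so a direct spectral bound fails. That is why a two-form rescaling argument is needed.

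\textbf{Step 3: the scaling trick.} For $t\ge 1$, define $g_t=g+(t^2-1)\,g|_{\text{horizontal}}$ after choosing any complement $H$ of $T_\pi E$. This enlarges horizontal directions only, so $\mathrm{id}:(E,g_t)\to(E,g)$ is distance-decreasing and $\Karea(E,g)\le\Karea(E,g_t)$. The fibre metrics are unchanged.

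This alone gives no improvement, so we instead replace each $h_b$ by a round metric $\rho_b$ of area $\operatorname{Area}(E_b)$ with $\rho_b\ge$ something. The key claim to establish is the following. For any metric $h$ on $S^2$ of area $a$, there is a round metric $\rho$ of area $a'$, for any $a'>a$, and a diffeomorphism $\phi$ with $\phi^*\rho$ having area form at least that of $h$ pointwise. The map can be obtained by Moser's trick: area forms of total mass $a'\ge a$ are related by a diffeomorphism up to a pointwise inequality. For two-forms on a $2$-dimensional fibre, the fibre part of $\Lambda^2 df$ is just the area ratio.

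Do this smoothly in $b$ by applying Moser to the family of area forms, keeping the round comparison metric fixed on a reference bundle structure; local triviality and a partition of unity handle the smooth dependence. Mixed horizontal--vertical components of $\Lambda^2df$ are controlled by taking $t\to\infty$ in Step 3. Then Theorem~\ref{thm:spectral} on round fibres of area $a'$ gives $\lambda_0=2\pi/a'$ times $1/2$, so the bound is $2\pi\cdot a'/(4\pi)\cdot 2=a'$. Letting $a'\downarrow A$ gives the result.

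\textbf{Main obstacle.} The hardest step is the uniformity of the mixed terms. One must show that $\sup\|\Lambda^2 df\|\to 1$ as $t\to\infty$ when $B$ is noncompact, given only that the fibre areas are bounded. If this fails, the first fallback is to run the spectral argument directly, using a twisted Dirac operator along fibres. There one would use the vertical area form as a bound on the fibrewise index-theoretic estimate, in place of $\lambda$.
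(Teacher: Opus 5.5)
Your reduction to Theorem~\ref{thm:spectral} is a genuinely different route from the paper's, and its core idea is sound. Suppose $g'$ has round fibres with the same area forms as $g$ and satisfies $\Lambda^2g'\ge\Lambda^2g$. Then $\mathrm{id}:(E,g')\to(E,g)$ is area-nonincreasing, so $\Karea(E,g)\le\Karea(E,g')\le 2\pi/\lambda_0$ with $\lambda_0=2\pi/A$. This uses that a round fibre of area $a$ has $\lambda=\operatorname{Sc}/4=2\pi/a$; your extra factor $1/2$ is spurious, and no $a'>A$ is needed.

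However, the step that makes this work is not justified. You need a smooth metric on $T_\pi E$ that is round on every fibre, and ``local triviality and a partition of unity'' cannot produce one. The transition maps are arbitrary diffeomorphisms of $S^2$, and a convex combination of round metrics is in general not round. Its $\lambda$ then falls strictly below $2\pi/\operatorname{Area}$, by the equality case of the Gauss--Bonnet estimate you quote. A real input is required. One option is Smale's theorem $\mathrm{Diff}(S^2)\simeq O(3)$, which reduces the structure group to $O(3)$. Another is parametric uniformization of the fibrewise conformal classes, followed by a nonlinear gluing (e.g.\ barycentres) in the space of round metrics of fixed area in a conformal class, which is a copy of hyperbolic $3$-space. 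Only after this does parametric Moser give round $\rho'_b$ with $\dd A_{\rho'_b}=\dd A_{h_b}$.

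Conversely, the step you call the main obstacle is not one, because domination on two-vectors is pointwise. Take $H=(T_\pi E)^{\perp_g}$ and $g'=\rho'\oplus t^2g|_H$, where $t$ is a smooth positive function on $E$ with $t^2\ge\max\{1,\sup_{v}h(v,v)/\rho'(v,v)\}$. Then $\Lambda^2g'\ge\Lambda^2g$ on each block $\Lambda^2T_\pi E$, $T_\pi E\otimes H$, $\Lambda^2H$. No uniformity over a noncompact $B$ is needed, and no fallback.

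Two further corrections are needed. First, Step~2 is stated backwards: pulling back along an area-nonincreasing $f:(E,g)\to(E,g')$ yields $\Karea(E,g)\ge\Karea(E,g')$. Second, for odd $\dim E$ you cannot pass to $g'+\dd s^2$. Since $g'$ shrinks some vertical vectors, the planes $v\wedge\partial_s$ are then not dominated. Instead, build the metric directly on $E\times\R\to B\times\R$ with $\partial_s$ in the horizontal part, and apply the even-dimensional case of Theorem~\ref{thm:spectral}.

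With these repairs, your argument shows the area bound as the round $n=2$ case of the scalar bound. The cost is that it inherits the whole family-index machinery and needs a supremum bound on vertical curvature.

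The paper works directly instead. Lemma~\ref{lem:holomorphic} uses Grothendieck splitting and a degree bound on line subbundles to show that $\int_{E_b}\|F_\nabla(e_1,e_2)\|_\op\,\dd A<2\pi$ makes $V|_{E_b}$ holomorphically trivial. Lemma~\ref{lem:descent} then assembles the fibrewise holomorphic sections into $H\to B$ with $\pi^*H\cong V$. Top-degree Chern monomials therefore vanish for dimensional reasons, with no index theory, uniformization in families, or Smale's theorem, and under only an integral curvature bound on each fibre.
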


Corollary~\ref{thm:round} and Theorem~\ref{thm:area} answer Question~25 in
Gromov's 2017 manuscript \emph{101 Questions, Problems and Conjectures
around Scalar Curvature} \cite[Question~25(iii), p.~36]{Gromov2017}.
There he asks whether the $S^n$- and $S^2$-product inequalities on
pages~35--36 extend to nontrivial sphere fibrations.

All three statements use Definition~\ref{def:karea}, including its
noncompact and odd-dimensional conventions. None assumes completeness,
a section of $\pi$, a spin structure, a linear structure group,
or totally geodesic fibres. In particular, the base need not be a sphere.
In dimension two the independent area bound remains stronger:
if $\operatorname{Sc}_{h_b}\ge\sigma$, Gauss--Bonnet gives
$\operatorname{Area}(E_b)\le8\pi/\sigma$, whereas the area theorem
itself needs no positive-curvature or spectral assumption.
For $n\ge3$, Theorem~\ref{thm:conformal} replaces $\lambda(b)$ by
the lowest eigenvalue of one quarter of the conformal Laplacian.
This quantity is at least $\lambda(b)$, so that refinement improves
the spectral bound while retaining its constant.

The relation between small-curvature bundles and scalar curvature
underlies Gromov's development of K-area \cite{Gromov1996}.
Gromov and Lawson \cite{GromovLawson1983} use twisted Dirac operators
and almost flat bundles to obstruct positive scalar curvature;
background on spinor bundles, Dirac operators, and index theory can be
found in Lawson and Michelsohn
\cite[Chapters~II--III]{LawsonMichelsohn1989}.
For products, Listing's estimate
\cite[Proposition~1.3.2]{Listing2012} applies to a closed spin manifold
of positive scalar curvature and a compact parameter manifold.
For the unit $S^n$, its stabilized Chern-character K-area constant is
$2$ in the normalization $1/\|F\|$, or $4\pi$ in ours.
Thus both the product constant and the family-Dirac method are already
present in that result. Listing's parametric descent theorem
\cite[Theorem~2.6]{Listing2013} also treats small vertical curvature
on products. Passing to a general sphere bundle requires the
fibrewise constructions to respect its transition maps.

A further issue is the use of all Chern numbers in
Definition~\ref{def:karea}. Wang's comparison with $\widehat A$-cowaist
\cite[Theorem~1.1]{Wang2023} and the Adams-operation comparison of
B\"ar and Hanke \cite[Lemma~5 and Theorem~6]{BaerHanke2024} introduce
a factor depending on the total dimension.
Related estimates for foliations also use different detection or
geometric hypotheses. Su and Wang \cite{SuWang2025} work with
complete foliated manifolds, covering $\widehat A$-cowaist, and spin
hypotheses. Benameur and Heitsch \cite[\S5]{BenameurHeitsch2019}
study a covering spin-foliation K-area under a Hausdorff
homotopy-groupoid hypothesis, using a restricted class of detecting
bundles and the curvature term in the Dirac square.
These invariants differ from Definition~\ref{def:karea}.
Gromov's later discussion of sphere fibrations and K-cowaist
\cite[\S\S6.3.2, 6.3.4--6.3.5]{Gromov2021} includes constructions
of detecting bundles from base and fibre data. Our estimates give a curvature
lower bound for every detecting bundle for the given total metric.

The spectral argument uses the family index theorem
\cite{AtiyahSinger1971,Bismut1986} and Liu's eta-form formula
\cite{Liu2017}. Over a closed base with oriented fibres, applying the
formula to both the untwisted and twisted Dirac families gives
vanishing fibre integrals of Chern characters.
The sphere Gysin sequence then makes all Chern classes pullbacks
from the base. This avoids the dimension factor arising from
operations on the detecting bundle.

A projective base change supplies the required vertical $\Spin^c$
structure; injectivity of pullback returns the vanishing to the
original base. Auxiliary fibrations for removing spin assumptions
occur in \cite[\S5$\frac34$]{Gromov1996}, and our base change uses
\cite[Proposition~2.1]{AtiyahSegal2005}. The scalar and conformal
spectral estimates are based on the methods discussed in
\cite[\S6.1.2]{Gromov2021}. For noncompact total spaces, we preserve
the detecting integrals by doubling a compact restriction of the bundle.

\section{Two-sphere fibres and holomorphic descent}\label{sec:two}

For oriented two-sphere fibres, small curvature makes the detecting
bundle descend to the base through its fibrewise holomorphic sections.
We first prove the estimate on a single fibre.

\begin{lemma}\label{lem:holomorphic}
Let $(\Sigma,h)$ be an oriented Riemannian two-sphere and let $V\to\Sigma$
be a Hermitian vector bundle with unitary connection $\nabla$. Give
$\Sigma$ the complex structure determined by $h$ and its orientation.
If
\begin{equation}\label{eq:fibre-threshold}
 \int_\Sigma \|F_\nabla(e_1,e_2)\|_\op\,\dd A_h<2\pi,
\end{equation}
where $(e_1,e_2)$ is any local positively oriented orthonormal frame,
then the holomorphic bundle $(V,\bar\partial_\nabla)$ is trivial.
\end{lemma}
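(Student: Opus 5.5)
The plan is to combine Grothendieck's splitting theorem with the curvature-decreasing property of holomorphic subbundles. Give $\Sigma$ the complex structure determined by $h$ and the orientation, so that $\Sigma\cong\C P^1$. The $(0,1)$-part $\bar\partial_\nabla$ of $\nabla$ is integrable for dimension reasons, so $(V,\bar\partial_\nabla)$ is a holomorphic bundle. By Grothendieck's theorem,
\[
 (V,\bar\partial_\nabla)\cong\mathcal O(a_1)\oplus\cdots\oplus\mathcal O(a_r),
 \qquad a_1\ge\cdots\ge a_r .
\]
It then suffices to show $a_1\le0$ and $a_r\ge0$, since this forces every $a_j=0$. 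Throughout, $\nabla$ is the Chern connection of $(V,\bar\partial_\nabla)$ and the Hermitian metric, since it is unitary with the given $(0,1)$-part.

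\emph{Step 1: $a_1\le0$.} Let $L\subset V$ be the image of the summand $\mathcal O(a_1)$. Because it is a direct summand, $L$ is a genuine holomorphic line subbundle, with no vanishing points. Give $L$ the induced Hermitian metric; its Chern connection is $P\nabla|_L$, where $P$ is orthogonal projection. The Gauss formula gives
\[
 F_L=PF_\nabla|_L-\beta^*\wedge\beta,
\]
where $\beta\in\Omega^{1,0}(\operatorname{Hom}(L,L^\perp))$ is the second fundamental form. For a positively oriented orthonormal frame one has $dz(e_1)=1$ and $dz(e_2)=i$. A short computation then shows that $i(\beta^*\wedge\beta)(e_1,e_2)$ is a nonnegative endomorphism; this is the standard fact that curvature decreases on holomorphic subbundles. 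Hence, for a unit local section $s$ of $L$, pointwise
\[
 iF_L(e_1,e_2)\le\langle iF_\nabla(e_1,e_2)s,s\rangle\le\|F_\nabla(e_1,e_2)\|_\op .
\]
Here $iF_\nabla(e_1,e_2)$ is Hermitian because $\nabla$ is unitary. Integrating and using $c_1=\frac{i}{2\pi}F$ gives
\[
 2\pi a_1=\int_\Sigma iF_L(e_1,e_2)\,\dd A_h\le\int_\Sigma\|F_\nabla(e_1,e_2)\|_\op\,\dd A_h<2\pi,
\]
so $a_1<1$, that is, $a_1\le0$.

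\emph{Step 2: $a_r\ge0$.} Apply Step 1 to the dual bundle $V^*$ with the dual connection. Its curvature is $-F_\nabla^{T}$, which has the same pointwise operator norm, so the hypothesis \eqref{eq:fibre-threshold} still holds. The holomorphic structure of $V^*$ splits as $\bigoplus\mathcal O(-a_j)$, so Step 1 gives $-a_r\le0$. Alternatively, one can use that curvature increases on the holomorphic quotient $V\to\mathcal O(a_r)$.

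I expect the main obstacle to be the sign bookkeeping in Step 1. One must check that, with the conventions $F_\nabla(X,Y)=[\nabla_X,\nabla_Y]-\nabla_{[X,Y]}$ and $c_1=\frac{i}{2\pi}F$, the degree of a line bundle is $\frac1{2\pi}\int iF(e_1,e_2)\,\dd A_h$ for the complex orientation. One must also check that the second-fundamental-form term has the favourable sign when evaluated on $(e_1,e_2)$. I would verify both on a model case, namely $\mathcal O(-1)\subset\C^2$ over $\C P^1$ with the flat metric on $\C^2$. There the subbundle curvature integrates to $-2\pi$, while $F_\nabla=0$, which is consistent with the inequality.
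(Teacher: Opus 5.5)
Your proposal is correct and matches the paper's proof step for step. Both use Grothendieck splitting. Both use the second-fundamental-form identity showing that curvature decreases on a holomorphic line subbundle, so every summand has degree $<1$. Both pass to the dual bundle, whose curvature has the same operator norm, to exclude negative degrees. On the sign you flagged: the paper takes $\beta^*$ to be the pointwise adjoint and writes $F_L=P_LF_\nabla|_L+\beta^*\wedge\beta$ with $i\beta^*\wedge\beta=-2B^*B\,\dd x\wedge\dd y$. Your two signs are each reversed relative to that (consistent only if your $\beta^*$ is the off-diagonal connection entry, i.e.\ minus the adjoint), but they compensate, so your inequality $iF_L(e_1,e_2)\le\|F_\nabla(e_1,e_2)\|_\op$ is the correct one.
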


\begin{proof}
The operator $\bar\partial_\nabla$ is integrable because a complex curve
has no forms of type $(0,2)$. Its Chern connection for the given
Hermitian metric is $\nabla$. The complex curve $\Sigma$ has genus zero,
so it is biholomorphic to $\C P^1$. By Grothendieck's splitting theorem
\cite[Theorem~1.1]{Grothendieck1957}, the holomorphic bundle is a sum
$\bigoplus_j\mathcal O(k_j)$, with $k_j\in\Z$.

For a holomorphic line subbundle $L\subset V$, use the smooth orthogonal
decomposition $V=L\oplus L^\perp$. The connection and its line curvature
have the form
\[
 \nabla=
 \begin{pmatrix}\nabla^L&-\beta^*\\ \beta&\nabla^Q\end{pmatrix},
 \qquad
 F_L=P_LF_\nabla|_L+\beta^*\wedge\beta,
\]
where $P_L$ is orthogonal projection and $\beta$ has type $(1,0)$.
In a positively oriented complex coordinate $z=x+iy$, write
$\beta=B\,\dd z$. Then
\[
 i\beta^*\wedge\beta=-2B^*B\,\dd x\wedge\dd y.
\]
The degree formula therefore gives
\begin{equation}\label{eq:line-degree}
 \deg L=\frac{1}{2\pi}\int_\Sigma iF_L
 \le\frac{1}{2\pi}\int_\Sigma
                    \|F_\nabla(e_1,e_2)\|_\op\,\dd A_h<1.
\end{equation}
Each positive $k_j$ would contradict \eqref{eq:line-degree}. Applying
the same argument to $V^*$ rules out negative $k_j$, since the dual
curvature has the same operator norm. Hence every $k_j$ is zero.
\end{proof}

The spaces of holomorphic sections give compatible trivializations
as the fibre varies.

\begin{lemma}\label{lem:descent}
Suppose the vertical tangent bundle of $\pi:E\to B$ is oriented and the
fibres are two-spheres. Let $V\to E$ have rank $r$ and unitary connection
$\nabla$. If \eqref{eq:fibre-threshold} holds on every fibre, there is a
smooth rank-$r$ complex bundle $H\to B$ and a natural smooth isomorphism
\[
 \ev:\pi^*H\longrightarrow V.
\]
If $\nabla=\dd_\tau$ outside a compact set $C\subset E$, there is a
frame $\sigma$ of $H$ over $B\setminus\pi(C)$ such that
\begin{equation}\label{eq:frames}
 \tau\circ\ev=\pi^*\sigma
 \quad\text{on }\pi^{-1}(B\setminus\pi(C)).
\end{equation}
\end{lemma}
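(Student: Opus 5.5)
Define $H_b=H^0(E_b,(V|_{E_b},\bar\partial_{\nabla}|_{E_b}))$, the space of fibrewise holomorphic sections. Each fibre $E_b$ inherits a metric $h_b=g|_{E_b}$ and, through the orientation of $T_\pi E$, an orientation. Together these give $E_b$ a complex structure, and $V|_{E_b}$ then carries the holomorphic structure $\bar\partial_\nabla$ determined by the restricted connection. By Lemma~\ref{lem:holomorphic}, the hypothesis \eqref{eq:fibre-threshold} makes $V|_{E_b}\cong\mathcal O^r$. Consequently $\dim H_b=r$, and the evaluation map $\ev_x:H_b\to V_x$ is an isomorphism for every $x\in E_b$. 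The plan is to show that $H=\bigsqcup_b H_b$ is a smooth rank-$r$ bundle, that $\ev$ is smooth, and then to verify \eqref{eq:frames}.

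For smoothness, work locally over a small ball $U\subset B$ with a trivialization $\pi^{-1}(U)\cong U\times S^2$. There the fibre complex structures $J_b$ and the operators $\bar\partial_b$ form a smooth family of elliptic operators $\bar\partial_b:\Gamma(S^2,V)\to\Omega^{0,1}_{J_b}(S^2,V)$. The codomain depends on $b$, so I would first identify it with a fixed bundle. One way is to compose with the contraction against $\partial_{\bar z}$ built from $J_b$. Another is to use $\bar\partial_b s=\tfrac12(\nabla s+i\,\nabla s\circ J_b)$, regarded as a $V$-valued one-form projected by the smooth family of projections onto the $(0,1)$ part; this projection identifies all the $(0,1)$ spaces smoothly with, say, the $b_0$ one. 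The result is a smooth family of Fredholm operators between fixed Sobolev spaces. Its index is $r$ by Riemann--Roch, and its kernel has constant dimension $r$, so its cokernel vanishes by Riemann--Roch since $H^1(\mathcal O^r)=0$. A family of surjective Fredholm operators has kernels forming a smooth vector bundle. Elliptic regularity, uniform in parameters, shows that these kernels consist of smooth sections and depend smoothly on $b$ in $C^\infty$. The local pieces are intrinsic subspaces $H_b$, so they glue and no choices enter. The map $\ev(b,s,x)=s(x)$ is smooth on $\pi^*H$ because $H$ is a smooth subbundle of the sections, and it is a fibrewise isomorphism, hence a smooth bundle isomorphism. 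Naturality is immediate from the construction. I expect this step to be the main technical point: keeping the parameter dependence of the complex structure honest while using only that $g$ restricts smoothly to fibres, with no submersion condition.

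For the frame statement, take $b\notin\pi(C)$. Then $E_b\cap C=\emptyset$, so on $E_b$ we have $\nabla=\dd_\tau$ and the frame sections $\tau_1,\dots,\tau_r$ are parallel. Parallel sections are in particular holomorphic, so $\tau_j|_{E_b}\in H_b$. Set $\sigma_j(b)=\tau_j|_{E_b}$. These sections are linearly independent and hence form a basis of $H_b$. They depend smoothly on $b$ because $\tau$ is smooth on the open set $\pi^{-1}(B\setminus\pi(C))$. Since $\pi$ is proper on the compact set $C$, the set $\pi(C)$ is closed, so $B\setminus\pi(C)$ is open. Finally, $\ev(\pi^*\sigma_j)(x)=\tau_j(x)$ holds by definition, which gives \eqref{eq:frames}, reading $\tau\circ\ev=\pi^*\sigma$ as the statement that the frames correspond under $\ev$.
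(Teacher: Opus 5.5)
Your proposal is correct and follows essentially the same route as the paper: both take the kernel bundle of the surjective fibrewise $\bar\partial_\nabla$ family, use evaluation as a pointwise isomorphism, and obtain the frames over $B\setminus\pi(C)$ from the parallel frame $\tau$. The only differences are in detail. The paper constructs the local kernel bases explicitly via $Q_b=Q_0(D_bQ_0)^{-1}$ and $P_b=I-Q_bD_b$ and checks the gluing cocycle, where you cite the standard surjective Fredholm-family fact; and you get the $\tau$-sections as a basis of $H_b$ by a dimension count, where the paper uses that holomorphic functions on a compact Riemann surface are constant.
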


\begin{proof}
The vertical orientation and metric define a smooth vertical complex
structure. For $b\in B$, let
\[
 D_b=\bar\partial_{\nabla|E_b},\qquad H_b=\ker D_b.
\]
Lemma~\ref{lem:holomorphic} implies $\dim_\C H_b=r$ and
$\operatorname{coker}D_b=H^1(\C P^1,\mathcal O^r)=0$.
Evaluation at any point of $E_b$ is an isomorphism from $H_b$ to the
corresponding fibre of $V$.

Fix $b_0$ and identify the fibres
and the relevant bundles over a small coordinate neighbourhood of
$b_0$, using a local trivialization and an auxiliary connection. The
operators $D_b$ become a smooth family between fixed Sobolev spaces
$H^{s+1}$ and $H^s$, with $s>2$. Choose an elliptic right inverse $Q_0$
for $D_{b_0}$. Near $b_0$, the operators
\[
 Q_b=Q_0(D_bQ_0)^{-1},\qquad P_b=I-Q_bD_b
\]
are smooth. Indeed, $D_bQ_0$ is close to the identity in operator norm,
so its inverse is given locally by a convergent Neumann series. Direct
composition gives
\[
 D_bQ_b=I,\qquad D_bP_b=0,\qquad
 P_b^2=P_b,\qquad P_b|_{\ker D_b}=I.
\]
Thus $P_b$ projects onto $\ker D_b$. Projecting a basis of
$\ker D_{b_0}$ gives a local basis of $H_b$. The right inverse may be
chosen as $D_{b_0}^*(D_{b_0}D_{b_0}^*)^{-1}$, so it is compatible with
all Sobolev orders. Parameter-dependent elliptic regularity then makes
the local basis smooth jointly in the base and fibre variables.

The resulting kernel bundle is intrinsic. To check the gluing, denote
the coordinate changes on sections and on $(0,1)$-forms by
$T^0_{ij}$ and $T^1_{ij}$, respectively, on an overlap of two local
charts. If $D_i$ and $D_j$ are the local operator families, then
\[
 D_iT^0_{ij}=T^1_{ij}D_j,\qquad
 h_{ij}:=T^0_{ij}|_{\ker D_j}:\ker D_j\longrightarrow\ker D_i.
\]
The original coordinate changes obey the cocycle identity, so their
restrictions satisfy $h_{ij}h_{jk}=h_{ik}$. If $u_{ij}$ is the
corresponding coordinate change on $V$, evaluation obeys
\[
 u_{ij}\ev_j=\ev_i\,\pi^*h_{ij}.
\]
It therefore
defines a smooth global map $\pi^*H\to V$, which is invertible at every
point. No uniform spectral estimate on a noncompact base is needed for
this local construction.

Put $D=\pi(C)$. For $b\notin D$, the entire fibre lies in the domain of
$\tau$. Its holomorphic sections are exactly
$s_z(x)=\tau_x^{-1}z$, with $z\in\C^r$, because holomorphic functions
on a compact connected Riemann surface are constant. Define
$\sigma_b(s_z)=z$. These sections vary smoothly with $b$, and
$\tau_x(\ev_x(s_z))=z$, which proves \eqref{eq:frames}.
\end{proof}

The projection of a smooth bundle with compact fibre is proper. Indeed,
a compact subset of the base is covered by finitely many neighbourhoods
whose compact closures lie in trivializing charts; the inverse images
of those closures are compact products. We will use this fact to preserve
compact support when enlarging a set to a union of fibres.

\begin{proposition}\label{prop:two-vanish}
Suppose $\dim E=2m$ and $E$ is oriented. Let $(V,\nabla)$ be a Hermitian
bundle with unitary connection satisfying
\eqref{eq:fibre-threshold} on every two-sphere fibre. In the noncompact
case assume the parallel-frame condition in
Definition~\ref{def:karea}. Then all numbers in
\eqref{eq:chern-number} vanish.
\end{proposition}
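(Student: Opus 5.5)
The plan is to show that every Chern form of $(V,\nabla)$ is cohomologous, with compact support in the noncompact case, to a pullback from the base $B$, which has dimension $2m-2$. Products of total degree $2m$ then vanish.

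First I would reduce to oriented vertical tangent bundle. Since $E$ is oriented, the orientation character of $T_\pi E$ equals the pullback of the orientation character $w$ of $B$ (as $TE\cong T_\pi E\oplus\pi^*TB$). Let $\hat B\to B$ be the orientation double cover of $B$ and $\hat E=\hat B\times_B E$ the pulled-back double cover of $E$. On $\hat E$ the vertical bundle is oriented, and pulling back $(V,\nabla)$ and the metric preserves the fibrewise hypothesis \eqref{eq:fibre-threshold}, since the fibres are isometric. Chern numbers multiply by $2$ under the orientation-preserving double cover (or are unchanged if $B$ is already orientable, in which case no cover is needed). So it suffices to treat oriented vertical bundles. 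Parallel frames at infinity pull back, and $\hat E\to E$ is a finite cover, so compactness of $C$ is preserved.

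Compact case. Lemma~\ref{lem:descent} gives $V\cong\pi^*H$ as smooth complex bundles. Chern classes are then $c_i(V)=\pi^*c_i(H)$ in $H^{2i}(E;\R)$. Any product of total degree $2m$ is the pullback of a class in $H^{2m}(B;\R)=0$, because $\dim B=2m-2$. Chern--Weil forms represent these classes, so the integrals vanish.

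Noncompact case, which I expect to be the main point. Put $D=\pi(C)$; it is compact and $\pi^{-1}(D)$ is compact by properness. Using the frame $\sigma$ from \eqref{eq:frames}, choose on $H$ a Hermitian metric and a unitary connection $\nabla^H$ equal to $\dd_\sigma$ outside a compact neighbourhood $K\supset D$. The pulled-back pair $(\pi^*H,\pi^*\nabla^H)$, transported by $\ev$, is a connection $\nabla'$ on $V$ with $\nabla'=\dd_\tau$ outside the compact set $\pi^{-1}(K)$, by \eqref{eq:frames}. The two connections $\nabla$ and $\nabla'$ both equal $\dd_\tau$ outside a common compact set. Hence the relevant product of Chern forms differs by $\dd$ of a compactly supported transgression form: the standard Chern--Simons form for the path $\nabla_t$ vanishes where both connections agree. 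After adjusting metrics if needed, since $\ev$ need not be unitary (characteristic forms of any connections on isomorphic bundles still transgress), the integrals over $E$ agree. For $\nabla'$ the Chern forms are exactly $\pi^*c_i(H,\nabla^H)$, so any product of degree $2m$ is the pullback of a $2m$-form on the $(2m-2)$-manifold $B$, hence identically zero. Therefore every number in \eqref{eq:chern-number} vanishes.

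The step needing care is the compact-support transgression. The frames $\tau$ and $\pi^*\sigma$ must match outside a compact set, which is exactly \eqref{eq:frames}. The Chern--Simons form built from $\nabla_t=(1-t)\nabla+t\nabla'$ must be supported in $C\cup\pi^{-1}(K)$, so that Stokes' theorem applies.
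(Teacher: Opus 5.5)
Your proposal is correct and follows essentially the same route as the paper. The shared steps are the holomorphic descent $V\cong\pi^*H$ from Lemma~\ref{lem:descent}; a connection on $H$ equal to $\dd_\sigma$ off a compact set, transported to $V$ by $\ev$ (unitarity is not needed); a compactly supported transgression justified by \eqref{eq:frames}; and a double cover that orients the fibres and multiplies the Chern numbers by $2$. The only cosmetic difference is that you take the orientation double cover of $B$ instead of the cover recording fibre orientations, and these coincide because $w_1(T_\pi E)=\pi^*w_1(TB)$ for oriented $E$ and $\pi^*$ is injective on $H^1(\,\cdot\,;\Z/2)$ when the fibres are connected.
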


\begin{proof}
First suppose the vertical tangent bundle is oriented. Apply
Lemma~\ref{lem:descent}. If $E$ is compact, then $B$ is compact and
$V\cong\pi^*H$ makes every top-degree Chern monomial a pullback of a
class of degree $2m>\dim B=2m-2$, hence zero.

For noncompact $E$, write $D=\pi(C)$ and choose a compact neighbourhood
$D'$ of $D$ in $B$. Choose a complex connection $\nabla^H$ on $H$ that
equals $\dd_\sigma$ outside $D'$. Explicitly, start with a connection
$\nabla^{\mathrm{aux}}$ and a smooth function $\chi$ equal to $1$ near
$D$ and supported in $\operatorname{int}D'$. On $B\setminus D$, take
$\nabla^H=\chi\nabla^{\mathrm{aux}}+(1-\chi)\dd_\sigma$; near $D$
this extends as $\nabla^{\mathrm{aux}}$.
Transport $\pi^*\nabla^H$ to a connection
$\nabla^0$ on $V$ through evaluation. The connection $\nabla^0$ need
not be unitary for the original metric on $V$; it is used only in
Chern--Weil theory. By \eqref{eq:frames},
$a=\nabla-\nabla^0$ is supported in the compact set $\pi^{-1}(D')$.

Let $P$ be the invariant polynomial of degree $m$ corresponding to a
top-degree Chern monomial, with the factors $i/(2\pi)$ included in its
definition. Set $\nabla^t=\nabla^0+ta$ and $F_t=(\nabla^t)^2$. If
$\widetilde P$ is the symmetric polarization of $P$, the transgression
formula is
\begin{equation}\label{eq:transgression}
 P(F_\nabla)-P(F_{\nabla^0})
 =\dd\left(m\int_0^1\widetilde P(a,F_t,\ldots,F_t)\,\dd t\right).
\end{equation}
For completeness, the differentiation underlying this formula is
\begin{align*}
 \frac{\partial}{\partial t}P(F_t)
 &=m\widetilde P(\dd_{\nabla^t}a,F_t,\ldots,F_t)\\
 &=m\,\dd\widetilde P(a,F_t,\ldots,F_t).
\end{align*}
The first equality uses $\partial_tF_t=\dd_{\nabla^t}a$; the second
uses invariance of $\widetilde P$ and
$\dd_{\nabla^t}F_t=0$. Integrating in $t$ gives
\eqref{eq:transgression}.
The primitive on the right has compact support. On the other hand,
$P(F_{\nabla^0})=\pi^*P(F_{\nabla^H})=0$ by the dimension of $B$.
Compactly supported Stokes' theorem proves the required integral
vanishing. 

If the vertical tangent bundle is not oriented, take the two-sheeted
cover $\widetilde B\to B$ that records an orientation of the fibres,
and pull back all data to $q:\widetilde E\to E$. This finite covering
preserves the parallel-frame condition and the fibrewise estimate.
Give $\widetilde E$ the pulled-back total orientation. The preceding
argument applies there, and
\[
 0=\int_{\widetilde E}q^*P(F_\nabla)
   =2\int_E P(F_\nabla).
\]
Thus every required Chern number vanishes also on $E$.
\end{proof}

\begin{proof}[Proof of Theorem~\ref{thm:area}]
If $A=+\infty$, the assertion is the extended-real inequality. Otherwise
$0<A<\infty$. First suppose $E$ is even-dimensional. If an admissible
pair had $K:=\|F_\nabla\|_g<2\pi/A$, then on each fibre
\[
 \int_{E_b}\|F_\nabla(e_1,e_2)\|_\op\,\dd A
 \le K\operatorname{Area}_g(E_b)\le KA<2\pi.
\]
Proposition~\ref{prop:two-vanish} would contradict admissibility.
Consequently every admissible pair has $K\ge2\pi/A$, proving the bound.
If $E$ is odd-dimensional, apply this same even-dimensional argument to
$E\times\R\to B\times\R$ with the product metric. The fibre areas are
unchanged, and the argument applies to every admissible bundle on
$E\times\R$. Definition~\ref{def:karea} completes the proof.
\end{proof}

\section{Family indices and cohomological descent}\label{sec:round}

We first consider a closed base. For oriented fibres, write $\pi_!$
for fibre integration. The following lemma reduces Chern-number
vanishing to vanishing of these fibre integrals.

For a smooth vertical metric $h$ on $T_\pi E$, let
$\kappa=\|F_\nabla\|_\verti$ denote the restriction of
\eqref{eq:comass} to orthonormal vertical vectors. If $h=g|_{T_\pi E}$,
then $\kappa\le\|F_\nabla\|_g$.

\begin{lemma}\label{lem:gysin-chern}
Let $\pi:E\to B$ be a smooth $S^n$ bundle with oriented vertical tangent
bundle, where $n\ge2$. If a complex bundle $V\to E$ satisfies
$\pi_!\ch_j(V)=0$ for every $j$ in real cohomology, then each $c_i(V)$
is pulled back from $B$. Every Chern monomial of real degree greater
than $\dim B$ consequently vanishes.
\end{lemma}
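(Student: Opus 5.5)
Use the Gysin sequence of the oriented sphere bundle in real cohomology:
\[
 \cdots\to H^{k-n-1}(B)\xrightarrow{\ \cup e\ }H^{k}(B)\xrightarrow{\ \pi^*\ }H^k(E)\xrightarrow{\ \pi_!\ }H^{k-n}(B)\xrightarrow{\ \cup e\ }H^{k+1}(B)\to\cdots,
\]
where $e$ is the Euler class of $T_\pi E$. Exactness at $H^k(E)$ says that a class on $E$ is pulled back from $B$ precisely when its fibre integral vanishes. The hypothesis gives vanishing fibre integrals only for the Chern character components $\ch_j(V)$, so the first step is to pass from $\ch$ to Chern classes.

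\textbf{Step 1: each $\ch_j(V)$ is a pullback.} Since $\pi_!\ch_j(V)=0$, exactness gives $\ch_j(V)=\pi^*\alpha_j$ for some $\alpha_j\in H^{2j}(B;\R)$. The set $\pi^*H^*(B;\R)\subset H^*(E;\R)$ is a subring, because $\pi^*$ is a ring homomorphism.

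\textbf{Step 2: each $c_i(V)$ is a pullback.} Over $\mathbb Q$, Newton's identities express $c_i$ as a polynomial with rational coefficients in $\ch_1,\dots,\ch_i$. More precisely, with $p_k=k!\,\ch_k$ the power sums, we have $i\,c_i=\sum_{k=1}^{i}(-1)^{k-1}c_{i-k}p_k$. By induction on $i$, each real Chern class $c_i(V)$ therefore lies in the subring $\pi^*H^*(B;\R)$. This is where real (not integral) coefficients matter; the statement is in real cohomology, so there is no issue.

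\textbf{Step 3: vanishing of high-degree monomials.} A Chern monomial $c_{i_1}\cdots c_{i_\ell}(V)$ equals $\pi^*(\beta_{i_1}\cdots\beta_{i_\ell})$, and $\beta_{i_1}\cdots\beta_{i_\ell}$ has degree greater than $\dim B$, so it is zero in $H^*(B;\R)$. Hence the monomial vanishes.

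The only point needing care is the Gysin sequence itself. For a closed base it is standard for oriented sphere bundles with oriented vertical bundle, via the Leray–Serre spectral sequence with two rows, since the fibre $S^n$ has cohomology only in degrees $0$ and $n$ and the local system is trivial by the orientation. The identification of its edge map with the fibre integration $\pi_!$ on de Rham classes also needs a word: integration along the fibre is the edge homomorphism up to sign. This identification is the step I expect to require the most justification. If $B$ is noncompact, the same sequence holds for ordinary de Rham cohomology, and the monomial statement only concerns degrees above $\dim B$, where $H^*(B)=0$. Note also that $n\ge2$ is not needed for exactness. It only ensures that $\pi_!$ lowers degree, so the conclusion does not depend on it.
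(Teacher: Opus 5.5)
Your proposal is correct and follows the paper's route: exactness of the Gysin sequence at $H^k(E;\R)$ makes each $\ch_j(V)$ a pullback, and Newton's identities (your recursive form is equivalent to the paper's exponential generating-function form) carry this to every $c_i(V)$. The vanishing $H^{>\dim B}(B;\R)=0$ then kills the high-degree monomials. The only difference is in how exactness is justified: you use the two-row Serre spectral sequence, while the paper uses a topological disk bundle and the Thom isomorphism, and neither needs a linear structure group.

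One small slip, which does not affect the argument because that map is never used: the class cupped with in $H^{k-n}(B)\to H^{k+1}(B)$ is the Euler class of the sphere bundle in $H^{n+1}(B;\R)$. It is not the Euler class of $T_\pi E$, which lives in $H^n(E;\R)$.
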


\begin{proof}
The Gysin sequence of an oriented sphere bundle is exact at its middle
term:
\begin{equation}\label{eq:gysin}
 H^k(B;\R)\xrightarrow{\pi^*}H^k(E;\R)
 \xrightarrow{\pi_!}H^{k-n}(B;\R).
\end{equation}
Here $\pi_!$ is integration along the oriented fibres.
This sequence does not require a linear structure group
\cite[\S4.D]{Hatcher2002}. To describe the topological construction,
extend each transition map of $S^n$ radially to its cone. This gives a
topological disk bundle $D\to B$ with boundary $E$, and $D$ retracts
onto $B$. The exact sequence of $(D,E)$ and the oriented Thom
isomorphism give \eqref{eq:gysin}. The radial extensions need not be
smooth at the cone points: only their topological Thom class is used.
The connecting map followed by the inverse Thom isomorphism agrees
with smooth fibre integration, with compatible orientations.

Thus $\ch_j(V)=\pi^*a_j$ for some $a_j\in H^{2j}(B;\R)$.
The Newton identity
\begin{equation}\label{eq:newton}
 \sum_{j\ge0}c_j(V)t^j
 =\exp\left(\sum_{j\ge1}(-1)^{j-1}(j-1)!\ch_j(V)t^j\right)
\end{equation}
expresses every Chern class as a polynomial in the Chern characters.
For formal Chern roots $x_\alpha$, this follows by expanding
$\log\prod_\alpha(1+x_\alpha t)$ and using
$\ch_j(V)=\sum_\alpha x_\alpha^j/j!$. Each $c_i(V)$ is therefore
$\pi^*b_i$ for some $b_i\in H^{2i}(B;\R)$, and
\[
 \prod_{\nu=1}^{\ell}c_{i_\nu}(V)
 =\pi^*\left(\prod_{\nu=1}^{\ell}b_{i_\nu}\right)=0
 \quad\text{if }2\sum_{\nu=1}^{\ell}i_\nu>\dim B.
\]
\end{proof}

A determinant connection pulled back from the base has zero vertical
curvature. Under this condition, the family index formula gives the
following vanishing statement.

\begin{lemma}\label{lem:two-indices}
Let $B$ be closed and let $\pi:E\to B$ be a smooth $S^n$ bundle,
$n\ge2$, with a vertical $\Spin^c$ structure whose determinant line is
$L=\pi^*L_0$ for a complex line bundle $L_0\to B$.
Give $L$ a connection pulled back from $L_0$.
If the untwisted vertical Dirac family and its twist by a Hermitian
bundle $V$ are both invertible, then $\pi_!\ch_j(V)=0$ for every $j$.
\end{lemma}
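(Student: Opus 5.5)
The plan is to apply the Atiyah--Singer family index theorem \cite{AtiyahSinger1971}, in its local form due to Bismut \cite{Bismut1986}, together with Liu's eta-form formula \cite{Liu2017} to both Dirac families, and then compare the two. Choose a horizontal splitting of $TE$ and the vertical metric, so that the Bismut superconnection is defined. For an invertible family, the index bundle vanishes. Liu's formula then expresses the fibre integral of the characteristic form as exact, namely as $\dd$ of the eta form. For the untwisted family this gives, in $H^*(B;\R)$,
\[
 \pi_!\bigl(\widehat A(T_\pi E)\,e^{c_1(L)/2}\bigr)=0 .
\]
For the twisted family it gives
\[
 \pi_!\bigl(\widehat A(T_\pi E)\,e^{c_1(L)/2}\ch(V)\bigr)=0 .
\]
Both hold at the level of cohomology classes.

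The key simplification uses $L=\pi^*L_0$, so $e^{c_1(L)/2}=\pi^*e^{c_1(L_0)/2}$. This factor comes out of $\pi_!$ by the projection formula, and since it is invertible it can be cancelled. The conditions therefore reduce to $\pi_!\widehat A(T_\pi E)=0$ and $\pi_!(\widehat A(T_\pi E)\ch V)=0$. Next, I would identify $\widehat A(T_\pi E)$. The vertical tangent bundle of a sphere bundle satisfies $T_\pi E\oplus\R\cong\pi^*$-type data only when the structure group is linear, which we do not assume. So I would argue in real cohomology that the Pontryagin classes of $T_\pi E$ are pulled back from $B$. To do this, apply Lemma~\ref{lem:gysin-chern}'s Gysin argument. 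Since $H^k(S^n)$ vanishes in degrees $0<k<n$ and $p_i$ has degree $4i$, the Leray--Serre/Gysin structure shows $H^*(E;\R)=\pi^*H^*(B)\oplus\pi^*H^{*-n}(B)\cdot u$ for a class $u$ with $\pi_!u=1$.

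The main obstacle is exactly this step, because $\widehat A(T_\pi E)$ may have a component in the $u$-part. I would instead proceed by induction on degree. Write $\widehat A(T_\pi E)=1+R$, where $R$ has positive degree. Then
\[
 \pi_!\ch_j(V)=-\pi_!(R\,\ch V)_{\text{appropriate degree}} .
\]
Induct on $j$. Suppose $\pi_!\ch_i(V)=0$ for all $i<j$. By the Gysin exactness \eqref{eq:gysin}, each such $\ch_i(V)$ is then $\pi^*a_i$. The terms $\pi_!(R\,\pi^*a_i)=a_i\,\pi_!R$ are controlled by the untwisted identity, provided $\pi_!R$ vanishes degreewise. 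Degreewise vanishing does hold, since the untwisted identity after cancelling gives $\pi_!\widehat A(T_\pi E)=0$ in each degree (note $\pi_!1=0$). Hence the degree-$(2j-n)$ component of the twisted identity reads $\pi_!\ch_j(V)+\sum_{i<j}a_i\,\pi_!R_{2j-2i}=0$, where $R_{2j-2i}$ denotes the degree-$(2j-2i)$ component of $R$. The sum vanishes, so $\pi_!\ch_j(V)=0$, completing the induction.

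Details to check are that the rank term $\ch_0=\rk V$ fits the base case, which follows from $\pi_!\widehat A=0$. One must also confirm the sign and normalization conventions in Liu's formula, in particular that $\widehat A$ is the Bismut form; these do not affect the vanishing.
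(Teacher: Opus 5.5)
Your argument is correct. Its analytic input is the paper's: Liu's eta-form formula, applied to the untwisted and the $V$-twisted families, gives $\pi_!\mathcal A=0$ and $\pi_!(\mathcal A\ch(V))=0$ in $H^*(B;\R)$, where $\mathcal A=\widehat A(T_\pi E)e^{c_1(L)/2}$.

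Only the algebra afterwards differs. The paper applies Gysin exactness \eqref{eq:gysin} to the untwisted identity itself and obtains $\mathcal A=\pi^*a$ with $a_0=1$. Then $a$ is a unit because $a-1$ is nilpotent, and the projection formula gives $a\,\pi_!\ch(V)=0$. You instead apply Gysin exactness to the lower Chern characters $\ch_i(V)$, and use the untwisted identity only in the form $\pi_!R=0$. Both routes rest on Gysin exactness and the projection formula. The paper's is a one-line unit argument; yours avoids inverting $a$ at the cost of an induction. Your separate cancellation of $e^{c_1(L_0)/2}$ is harmless but not needed.

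The ``obstacle'' you raise is not real. The identity $\pi_!\widehat A(T_\pi E)=0$ together with \eqref{eq:gysin} already places $\widehat A(T_\pi E)$ in $\pi^*H^*(B;\R)$.

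You should delete the aside on the structure of $H^*(E;\R)$, because it is false in general. A class $u$ with $\pi_!u=1$ exists only when the real Euler class of the sphere bundle vanishes, and this can fail for odd $n$. For example, for the quaternionic Hopf bundle $S^7\to S^4$ with fibre $S^3$, $H^3(S^7;\R)=0$, so no such $u$ exists. The suggested route to showing that the Pontryagin classes of $T_\pi E$ are pulled back is also never carried out.

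Your final induction uses neither claim, so correctness is unaffected. Its base case is automatic, since $\pi_!\ch_0(V)$ lies in $H^{-n}(B;\R)=0$.
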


\begin{proof}
Choose a horizontal distribution and the associated vertical tangent
connection. The characteristic class appearing in the family index
formula is
\[
 \mathcal A=\widehat A(T_\pi E)e^{c_1(L)/2}\in H^{\mathrm{even}}(E;\R).
\]
For a twisting bundle $V$, write
\[
 \Omega_V=\int_{E/B}\widehat A(T_\pi E,\nabla^{T_\pi E})
                    \exp\left(\frac{iF_L}{4\pi}\right)\ch(V,\nabla).
\]
Liu's eta-form formula \cite[(1.3), (2.57), (2.81)]{Liu2017},
with the acting group trivial, gives
\begin{equation}\label{eq:eta-interface}
 \dd\widetilde\eta_V=
 \begin{cases}
 \Omega_V-\ch(\ker D_V^+,\nabla^{\ker,+})
          +\ch(\ker D_V^-,\nabla^{\ker,-}),&n\text{ even},\\
 \Omega_V,&n\text{ odd}.
 \end{cases}
\end{equation}
The hypotheses are a smooth geometric family with closed oriented
$\Spin^c$ fibres, a Hermitian twisting bundle, and locally constant
kernel dimension. Invertibility makes every kernel zero, including
both half-spinor kernels when $n$ is even. Thus
\begin{equation}\label{eq:family-zero}
 \pi_!(\mathcal A\ch(V))=0\quad\text{in }H^*(B;\R).
\end{equation}
For odd $n$ this is a statement in every odd base degree, obtained
from the eta form; it is not a numerical index calculation on a fibre.

Apply the same formula first to the untwisted family. It gives
$\pi_!\mathcal A=0$. By \eqref{eq:gysin}, degree by degree,
\[
 \mathcal A=\pi^*a,\qquad a\in H^{\mathrm{even}}(B;\R),\qquad a_0=1.
\]
The choice $a_0=1$ is possible because the fibres are connected.
The positive-degree class $u=a-1$ is nilpotent on the finite-dimensional
base. For sufficiently large $N$,
$a^{-1}=\sum_{q=0}^{N-1}(-u)^q$.
The twisted formula \eqref{eq:family-zero} and the projection formula
now give
\[
 0=\pi_!(\pi^*a\ch(V))=a\,\pi_!\ch(V),\qquad
 \pi_!\ch(V)=0.
\]
Separating homogeneous degrees proves the assertion. This second use
of the family formula replaces any assumption that $T_\pi E$ is
stably pulled back from a vector bundle on $B$.
\end{proof}

The determinant condition in Lemma~\ref{lem:two-indices} can be
arranged after a base change with injective pullback in real cohomology.

\begin{lemma}\label{lem:brauer}
Let $B$ be closed and let $\pi:E\to B$ be a smooth $S^n$ bundle,
$n\ge2$, with oriented vertical tangent bundle. There is a closed
smooth manifold $\mathcal F$ and a smooth map $\rho:\mathcal F\to B$
such that $\rho^*:H^*(B;\R)\to H^*(\mathcal F;\R)$ is injective,
and the pulled-back sphere bundle $\pi_{\mathcal F}:E_{\mathcal F}\to
\mathcal F$ has a vertical $\Spin^c$ structure with determinant
$\pi_{\mathcal F}^*L_0$ for a line bundle $L_0\to\mathcal F$.
\end{lemma}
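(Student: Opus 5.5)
Following the introduction's reference to \cite[Proposition~2.1]{AtiyahSegal2005}, the plan is to take for $\mathcal F$ the bundle of projective spaces associated with a lift of the vertical Clifford-bundle obstruction, which makes the obstruction vanish. The vertical bundle $T_\pi E$ is an oriented rank-$n$ real bundle on $E$. A vertical $\Spin^c$ structure exists exactly when $W_3(T_\pi E)=\beta w_2(T_\pi E)\in H^3(E;\Z)$ vanishes. Moreover, its determinant can be pulled back from the base precisely when $w_2(T_\pi E)$ is the reduction of a class in $\pi^*H^2(B;\Z)$. So the first step is to control $w_2(T_\pi E)$ in terms of the base. Since $n\ge2$, the fibre $S^n$ has $H^1=0$. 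It also has $H^2(S^n;\Z/2)=0$ unless $n=2$. The Serre spectral sequence then shows that restriction of $w_2(T_\pi E)$ to a fibre is zero, because $T_\pi E|_{E_b}=TS^n$ has $w_2=0$. Consequently $w_2(T_\pi E)=\pi^*w$ for some $w\in H^2(B;\Z/2)$. For $n=2$ this needs care, but $w_2(TS^2)=0$ still holds, and the $E_2^{0,2}$ term is killed by the restriction argument. The class $w$ is unique up to the kernel of $\pi^*$ in degree $2$, which vanishes because $E_2^{-,\,\le n-1}$ rows are zero for $q<n$ and $n\ge2$.

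The second step is the base change. The class $\beta w\in H^3(B;\Z)$ is $2$-torsion, hence a torsion class. By \cite[Proposition~2.1]{AtiyahSegal2005}, every torsion class in $H^3(B;\Z)$ is represented by a projective bundle $P\to B$ with fibre $\C P^{N-1}$ for some $N$. Equivalently, it comes from a principal $PU(N)$-bundle whose Dixmier--Douady class is $\beta w$. Set $\mathcal F=P$, a closed smooth manifold, with $\rho$ the projection. By the Leray--Hirsch theorem, $H^*(\mathcal F;\R)$ is free over $H^*(B;\R)$ on $1,h,\dots,h^{N-1}$, where $h$ is the first Chern class of a line bundle $\mathcal O(1)$ that is defined only on the fibres. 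Even without a global $\mathcal O(1)$, this gives injectivity of $\rho^*$ in real cohomology. Injectivity also follows from fibre integration against a suitable power of a closed form restricting to the Fubini--Study class.

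The third step, which I expect to be the main obstacle, is to show that $\rho^*\beta w=0$ in $H^3(\mathcal F;\Z)$. This is the standard fact that the Dixmier--Douady class dies on its own projective bundle: the tautological line $\mathcal O(-1)$ exists on $P$ as a twisted line bundle, so $\rho^*\beta w$ is trivialised. I would verify this via the Serre spectral sequence of $P\to B$. There the transgression $d_3$ sends the fibre generator $h\in H^2(\C P^{N-1};\Z)$ to the Dixmier--Douady class, so that class becomes zero upstairs. Hence $\rho^*w$ lifts to an integral class $c\in H^2(\mathcal F;\Z)$. Let $L_0\to\mathcal F$ be the line bundle with $c_1(L_0)=c$. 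The pulled-back bundle $E_{\mathcal F}=\rho^*E$ has $w_2(T_{\pi_{\mathcal F}}E_{\mathcal F})=\pi_{\mathcal F}^*\rho^*w\equiv c_1(\pi_{\mathcal F}^*L_0)\pmod 2$. Standard $\Spin^c$ theory, through the lifting of the frame bundle of $T_{\pi_{\mathcal F}}E_{\mathcal F}\oplus\pi_{\mathcal F}^*L_0$, then produces a vertical $\Spin^c$ structure with determinant $\pi_{\mathcal F}^*L_0$. The orientation of $T_{\pi_{\mathcal F}}E_{\mathcal F}$ is the pulled-back one.
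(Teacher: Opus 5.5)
Your proposal is correct and follows the paper's proof step for step. The shared steps are: descent of $w_2(T_\pi E)$ to $B$ (your Serre spectral sequence argument is the paper's mod-two Gysin argument), the Atiyah--Segal projective bundle realizing $\beta w$, the vanishing of $\rho^*\beta w$, real injectivity of $\rho^*$ by a fibre-integration transfer, and the Bockstein lift giving $L_0$ and the $\Spin^c$ structure via the direct sum with $(\pi_{\mathcal F}^*L_0)_\R$. The only local differences are these: you get $\rho^*\beta w=0$ from the standard transgression identity $d_3(h)=\pm\beta w$, which you cite rather than prove, whereas the paper uses the explicit lift $[\operatorname{diag}(z,A)]\mapsto\operatorname{diag}(1,z^{-1}A)$ of the stabilizer $P(U(1)\times U(r-1))$ into $U(r)$; and your transfer needs a global real class restricting to the hyperplane class on fibres, which $c_1(T_\rho\mathcal F)/N$ supplies just as the paper's Euler class $e(T_\rho\mathcal F)$ does.
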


\begin{proof}
The mod-two Gysin sequence in degree two contains
\[
 0\longrightarrow H^2(B;\Z/2)\xrightarrow{\pi^*}
 H^2(E;\Z/2)\xrightarrow{\pi_!}H^{2-n}(B;\Z/2).
\]
For $n\ge3$ the last group is zero. For $n=2$,
$\pi_!w_2(T_\pi E)$ has value
$\langle w_2(TS^2),[S^2]\rangle=\chi(S^2)\bmod2=0$ on every
component. In either case there is a unique $a_2\in H^2(B;\Z/2)$
with $w_2(T_\pi E)=\pi^*a_2$.
Let $\alpha=\beta(a_2)\in H^3(B;\Z)$, where $\beta$ is the
Bockstein for $0\to\Z\xrightarrow{2}\Z\to\Z/2\to0$.
This is a torsion class.

A closed smooth manifold has finite CW type. The finite-dimensional
realization of torsion classes in $H^3$ therefore gives a principal
$\operatorname{PU}(r)$ bundle $Q\to B$ whose obstruction to a lift
through $U(r)\to\operatorname{PU}(r)$ is $\alpha$
\cite[Proposition~2.1(i), (iv)--(vi)]{AtiyahSegal2005}.
The integer $r$ need not be the order of $\alpha$; the distinction
between the order of a Brauer class and its possible realization degrees
is the topological period-index problem \cite[\S1]{AntieauWilliams2014}.
We take a smooth representative: for a compact Lie group, a
finite-dimensional smooth approximation to its classifying space,
followed by smooth approximation of the classifying map from $B$,
provides such a representative without changing the obstruction.
Set
\[
 \mathcal F=Q\times_{\operatorname{PU}(r)}\C P^{r-1},
 \qquad \rho:\mathcal F\longrightarrow B.
\]
If $\alpha=0$, we may instead take $r=1$ and $\rho$ to be the identity.

The stabilizer of a
point of $\C P^{r-1}$ is
$H=P(U(1)\times U(r-1))$. Its inclusion into
$\operatorname{PU}(r)$ lifts to $U(r)$ by the homomorphism
\[
 [\operatorname{diag}(z,A)]\longmapsto
                 \operatorname{diag}(1,z^{-1}A).
\]
Multiplying $(z,A)$ by a common scalar leaves this expression unchanged.
The principal $H$ bundle $Q\to Q/H=\mathcal F$, extended by this
homomorphism, lifts $\rho^*Q$. Consequently $\rho^*\alpha=0$ in
integer cohomology. On the other hand, the vertical tangent bundle
$T_\rho\mathcal F$ is complex and has its complex orientation.
Its Euler class satisfies
\begin{equation}\label{eq:projective-transfer}
 \rho_!\bigl(e(T_\rho\mathcal F)\smile\rho^*z\bigr)=r z,
 \qquad z\in H^*(B;\R),
\end{equation}
since $\chi(\C P^{r-1})=r$. This proves real injectivity, which is
compatible with killing an integer torsion class.

The Bockstein exact sequence now gives an integer lift
$c\in H^2(\mathcal F;\Z)$ of $\rho^*a_2$.
Choose a line bundle $L_0\to\mathcal F$ with $c_1(L_0)=c$.
For the pullback square
\begin{equation}\label{eq:base-square}
 \begin{array}{ccc}
 E_{\mathcal F}=E\times_B\mathcal F&\xrightarrow{\ q\ }&E\\
 {\scriptstyle\pi_{\mathcal F}}\downarrow&&\downarrow{\scriptstyle\pi}\\
 \mathcal F&\xrightarrow{\ \rho\ }&B,
 \end{array}
\end{equation}
we have
$w_2(T_{\pi_{\mathcal F}}E_{\mathcal F})
=c_1(\pi_{\mathcal F}^*L_0)\bmod2$.
The $\Spin^c$ lifting criterion with specified determinant
\cite[\S1.1]{BotvinnikRosenberg2024} supplies the required structure.
Equivalently, choose a spin structure on the oriented direct sum
$T_{\pi_{\mathcal F}}E_{\mathcal F}\oplus
(\pi_{\mathcal F}^*L_0)_{\R}$ and use the reduction to its two summands.
Choosing a connection on $L_0$ and pulling it back makes the determinant
curvature zero on vertical two-planes. In dimension two this choice
is essential; an arbitrary $\Spin^c$ structure on $S^2$ can have
nontrivial determinant along the fibre.
\end{proof}

The fibrewise Dirac estimate now gives the required curvature threshold.

\begin{proposition}\label{prop:closed-spectral}
Let $B$ be closed and let $\pi:E\to B$ be a smooth $S^n$ bundle,
$n\ge2$, with a smooth vertical metric $h$. Suppose that on every fibre
\[
 \lambda_{\min}(\Delta_h+\operatorname{Sc}_h/4)\ge\lambda_0>0.
\]
If a Hermitian bundle $V\to E$ with unitary connection satisfies
$\kappa<2\lambda_0/[n(n-1)]$, then every Chern monomial of real degree
greater than $\dim B$ vanishes. If $T_\pi E$ is oriented, each
$\ch_j(V)$ and $c_j(V)$ is pulled back from $B$ in real cohomology.
\end{proposition}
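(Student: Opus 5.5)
The plan is to reduce to the setting of Lemma~\ref{lem:two-indices}, using the base change of Lemma~\ref{lem:brauer}. The required invertibility will come from a fibrewise Bochner argument combined with Kato's inequality.

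First suppose $T_\pi E$ is oriented. Apply Lemma~\ref{lem:brauer} to obtain $\rho:\mathcal F\to B$ with $\rho^*$ injective in real cohomology. The pulled-back bundle $\pi_{\mathcal F}:E_{\mathcal F}\to\mathcal F$ then carries a vertical $\Spin^c$ structure with determinant $\pi_{\mathcal F}^*L_0$. Pull back $h$, $V$ and $\nabla$ along $q$. Each fibre of $E_{\mathcal F}$ is canonically a fibre of $E$, so the fibrewise spectral hypothesis and the bound $\kappa<2\lambda_0/[n(n-1)]$ persist. Choose a horizontal distribution and give $L=\pi_{\mathcal F}^*L_0$ the pulled-back connection. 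The curvature of $L$ then vanishes on vertical two-planes. Consequently, on a single fibre $E_b$ the vertical $\Spin^c$ Dirac operator is the Dirac operator of $(E_b,h_b)$ with a flat determinant connection. This restricted operator depends only on $h_b$ and $\nabla|_{E_b}$, so the choice of horizontal distribution, which is needed only for the family formula, does not affect fibrewise invertibility.

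The key analytic step is to show that $D_V$ is invertible on each fibre. By the Lichnerowicz formula,
\[
 D_V^2=\nabla^*\nabla+\frac{\operatorname{Sc}_h}{4}+\mathcal R^V,
 \qquad
 \mathcal R^V=\sum_{i<j}c(e_i)c(e_j)\otimes F_\nabla(e_i,e_j).
\]
Here the $\tfrac12 F_L$ term drops out because $L$ is vertically flat. Each summand of $\mathcal R^V$ is a product of a unitary Clifford element with an operator of norm at most $\kappa$. Hence $\|\mathcal R^V\|_\op\le\frac{n(n-1)}2\kappa$. For a spinor $\psi$, Kato's inequality $|\dd|\psi||\le|\nabla\psi|$ holds in the weak $H^1$ sense, and it gives
\[
 \|D_V\psi\|^2\ge\int\Bigl(|\dd|\psi||^2+\frac{\operatorname{Sc}_h}4|\psi|^2\Bigr)
 -\frac{n(n-1)}2\kappa\|\psi\|^2
 \ge\Bigl(\lambda_0-\frac{n(n-1)}2\kappa\Bigr)\|\psi\|^2 .
\]
The middle inequality applies the hypothesis $\lambda_{\min}(\Delta_h+\operatorname{Sc}_h/4)\ge\lambda_0$ to the test function $|\psi|$. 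The final constant is positive exactly when $\kappa<2\lambda_0/[n(n-1)]$. The same computation with $V$ trivial and $\kappa=0$ shows that the untwisted family is invertible. In the even case these estimates cover both half-spinor kernels.

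With both families invertible, Lemma~\ref{lem:two-indices} gives $(\pi_{\mathcal F})_!\ch_j(q^*V)=0$. Fibre integration commutes with pullback in the square~\eqref{eq:base-square}, so
\[
 \rho^*\pi_!\ch_j(V)=(\pi_{\mathcal F})_!q^*\ch_j(V)=0 .
\]
Injectivity of $\rho^*$ then yields $\pi_!\ch_j(V)=0$. Lemma~\ref{lem:gysin-chern} now shows that each $\ch_j(V)$ and each $c_j(V)$ is pulled back from $B$, and that monomials of degree greater than $\dim B$ vanish.

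If $T_\pi E$ is not oriented, pass to the orientation double cover $\widetilde B\to B$ and its pullback $\widetilde E\to E$. The space $\widetilde B$ is closed, it has the same dimension as $B$, and the hypotheses lift. A Chern monomial of degree greater than $\dim B$ therefore vanishes on $\widetilde E$. For a finite cover, the transfer makes pullback injective in real cohomology, so the monomial vanishes on $E$.

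I expect the main obstacle to be making the Bochner--Kato step rigorous. This requires checking the sharp pointwise bound on $\mathcal R^V$ in the paper's comass normalization, and justifying that $|\psi|\in H^1$ is an admissible test function for $\lambda_{\min}$. For the second point, I would first try regularizing with $\sqrt{|\psi|^2+\epsilon^2}$ and then letting $\epsilon\to0$.
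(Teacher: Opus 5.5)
Your proposal is correct and follows the paper's proof essentially step for step. It uses the same ingredients: the Bochner--Kato gap with the $n(n-1)\kappa/2$ bound on the twisting term, invertibility of both the twisted and untwisted families, Lemma~\ref{lem:two-indices} on the projective base change of Lemma~\ref{lem:brauer}, the return to $B$ via injectivity of $\rho^*$ and Lemma~\ref{lem:gysin-chern}, and the orientation double cover for unoriented fibres. The only difference is the order of presentation: the paper first treats the case where a $\Spin^c$ structure with pulled-back determinant already exists, and only afterwards performs the base change.
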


\begin{proof}
First assume that $T_\pi E$ is oriented and has a $\Spin^c$ structure
with determinant $L=\pi^*L_0$. Give $L$ a pulled-back connection.
For the vertical spinor bundle $\mathcal S$, the fibrewise twisted
Dirac operator is
\[
 D_{V,b}s=\sum_{i=1}^n c(e_i)\nabla^{\mathcal S\otimes V}_{e_i}s,
\]
where $(e_i)$ is an orthonormal vertical frame and $c$ is Clifford
multiplication with $c(e_i)^2=-1$.
The square formula, in the $i<j$ convention
\cite[pp.~6--7]{Melrose2006}, is
\begin{equation}\label{eq:dirac-square}
 D_{V,b}^2=\nabla^*\nabla+\frac{\operatorname{Sc}_{h_b}}4
       +\sum_{i<j}c(e_i)c(e_j)\otimes F_\nabla(e_i,e_j).
\end{equation}
The determinant curvature term is zero. Each twisting summand is
self-adjoint and has operator norm at most $\kappa$, so their sum
has norm at most $n(n-1)\kappa/2$, independently of $\rk V$.

Kato's inequality $|\dd|s||\le|\nabla s|$ and the scalar Rayleigh
quotient give
\begin{align}
 \|D_{V,b}s\|_{L^2}^2
 &\ge\int_{E_b}\left(|\dd|s||^2+
       \frac{\operatorname{Sc}_{h_b}}4|s|^2\right)
       -\frac{n(n-1)}2\kappa\|s\|_{L^2}^2\notag\\
 &\ge\left(\lambda_0-\frac{n(n-1)}2\kappa\right)
                         \|s\|_{L^2}^2>0
 \quad\text{for }s\ne0.\label{eq:gap}
\end{align}
The first integral uses the fibre volume form. At zeros of $s$, one
may use $(|s|^2+\varepsilon^2)^{1/2}$ and pass to the $H^1$ limit;
the scalar potential is bounded on each closed fibre.
Thus both the twisted and the untwisted families are invertible.
Lemmas~\ref{lem:two-indices} and~\ref{lem:gysin-chern} prove the
assertion in this case.

For an arbitrary oriented vertical tangent bundle, use
Lemma~\ref{lem:brauer} and the square \eqref{eq:base-square}.
Give each sphere fibre upstairs the pullback of its original metric.
The map $q$ is then an isometry on that fibre. The spectral lower bound
and the vertical curvature norm of $q^*V$ are unchanged. No pullback
of the total metric is required; an auxiliary horizontal distribution
can be chosen independently.
The preceding case gives $(\pi_{\mathcal F})_!\ch_j(q^*V)=0$.
Naturality of oriented fibre integration gives
\begin{equation}\label{eq:brauer-return}
 \rho^*(\pi_!\ch_j(V))=(\pi_{\mathcal F})_!q^*\ch_j(V)=0.
\end{equation}
By \eqref{eq:projective-transfer}, $\pi_!\ch_j(V)=0$ on the original
base $B$. Lemma~\ref{lem:gysin-chern} now uses $\dim B$, not
$\dim\mathcal F$. The auxiliary rank $r$ consequently does not enter
the curvature constant.

Finally, the fibre orientation local system is a local system on
$B$. Pull back to its orientation double cover. The oriented case
makes every Chern monomial of degree greater than $\dim B$ vanish on
the pulled-back total space. For a finite covering, transfer composed
with pullback is multiplication by the number of sheets, so pullback
is injective over $\R$. The same monomials vanish on $E$.
\end{proof}

\section{Compact reduction preserving the fibre metrics}\label{sec:noncompact}

Ordinary cohomological vanishing does not imply vanishing of compactly
supported Chern--Weil integrals. The following double preserves these
integrals and the fibre metrics, reducing the problem to a closed base.

\begin{lemma}\label{lem:double}
Let $\pi:E\to B$ be a smooth $S^n$ bundle with oriented even-dimensional
total space, and let $g$ be a Riemannian metric on $E$.
Suppose $V\to E$ has a unitary connection $\nabla$ and a global unitary
parallel frame $\tau$ outside a compact set $C$.
There is a closed sphere bundle $\widehat\pi:\widehat E\to\widehat B$
with oriented total space, a metric $\widehat g$, and a Hermitian
bundle with connection $(\widehat V,\widehat\nabla)$ such that every
fibre of $\widehat E$ is isometric to a fibre of $E$ and
\begin{align}
 \|F_{\widehat\nabla}\|_\verti&=\|F_\nabla\|_\verti,
 &\|F_{\widehat\nabla}\|_{\widehat g}&=\|F_\nabla\|_g,
 \label{eq:double-norms}\\
 \int_{\widehat E}P(F_{\widehat\nabla})
   &=\int_E P(F_\nabla)\label{eq:double-integral}
\end{align}
for every top-degree Chern monomial $P$.
\end{lemma}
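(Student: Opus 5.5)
We plan to build $\widehat E$ by cutting the base along a smooth compact domain and doubling it. Put $D=\pi(C)$; it is compact because $\pi$ is proper on compact sets and continuous. Choose a compact smooth codimension-zero submanifold $K\subset B$ with boundary whose interior contains $D$. For instance, take a regular value of a proper smooth exhaustion function, or a sublevel set of a smooth compactly supported function that equals $1$ near $D$. Then $E_K=\pi^{-1}(K)$ is a compact manifold with boundary $\pi^{-1}(\partial K)$, because $\pi$ is a submersion. Over $\pi^{-1}(K\setminus D)$ the frame $\tau$ trivializes $(V,\nabla)$ as $(\C^r,\dd)$.

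Set $\widehat B=K\cup_{\partial K}\overline K$, the double, which is a closed manifold. Set $\widehat E=E_K\cup_{\partial E_K}\overline{E_K}$, where $\overline{E_K}$ carries the reversed orientation so that $\widehat E$ is oriented. The projection $\widehat\pi$ is defined sheetwise. It is a smooth $S^n$ bundle once we fix a collar of $\partial K$ in $B$ and use its preimage, via a bundle trivialization along the collar, as a collar of $\partial E_K$.

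The main obstacle is the metric. The doubled metric must be smooth across the seam, and it must preserve both the fibre isometry types and the full comass $\|F\|_g$. The curvature is zero near the seam, since $V$ is flat there. So the equality $\|F_{\widehat\nabla}\|_{\widehat g}=\|F_\nabla\|_g$ requires only that $\widehat g$ restrict to $g$ on a neighbourhood of $\pi^{-1}(D)$, where all the curvature lives. The same holds for $\|F\|_\verti$, with the following caveat. The supremum over $E$ includes points outside $C$, where $F=0$; both suprema are therefore maxima over the curvature support, and they agree.

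To handle the metric, we modify $g$ only in a region $\pi^{-1}(U)$, where $U$ is a collar of $\partial K$ disjoint from $D$. There we replace $g$ by a reflection-symmetric metric whose restriction to each fibre $E_b$ is still $h_b$. To do this, write $g=h\oplus(\text{horizontal part})$ relative to a $g$-orthogonal splitting $T_\pi E\oplus T_\pi E^{\perp}$. Keep $h$ fixed. Interpolate the horizontal connection and the base metric, using a cutoff in the collar coordinate $t$, to a product-type form near $t=0$. This form is $h_{b(t)}$ plus $\dd t^2$ plus a metric on $\partial K$, with the family $h_{b}$ extended evenly in $t$ across the seam. Here we reparametrize the collar so that the fibre metric family is constant in $t$ near $\partial K$: identify fibres near the seam through a flow of a horizontal lift, and pull $h$ back along a reparametrization $t\mapsto\phi(t)$ that is flat at $0$. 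Every fibre of $\widehat E$ is then isometric to some $E_b$ with $b\in K$.

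On the bundle side, $(\widehat V,\widehat\nabla)$ is $V|_{E_K}$ on one sheet and the trivial flat bundle $\C^r$ on the other. They are glued along the seam by $\tau$. This is smooth and unitary, and it is flat on the whole second sheet.

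For the integrals, the form $P(F_{\widehat\nabla})$ equals $P(F_\nabla)$ on the first sheet, with the orientation unchanged. It vanishes on the mirror sheet and near the seam. Since $P(F_\nabla)$ is supported in $C\subset E_K$, we obtain \eqref{eq:double-integral}. Checking the smoothness of the glued metric, and that the reparametrization keeps each fibre isometric to an original fibre, is the delicate step. We would first try the collar reparametrization described above.
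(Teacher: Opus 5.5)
Your proposal is correct and is essentially the paper's proof: a compact domain containing $\pi(C)$ whose collar avoids $\pi(C)$, a horizontal-lift flow trivializing the collar, and a reparametrization $\varphi$ that vanishes near the seam and equals $t$ near the inner edge, so that every new fibre is isometric to an original one. As in the paper, you then double with reversed orientation on the second half and put the flat trivial bundle there, glued by $\tau$. The only difference is in the total metric: you interpolate the horizontal connection and base metric to a product form, while the paper takes any metric realizing the doubled vertical metric and convexly interpolates it with $g$ near $C$. Also drop the phrase ``keep $h$ fixed'', since near the seam the fibre metrics are the transported $h_{(y,\varphi(t))}$, not $h_b$.
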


\begin{proof}
Choose a compact smooth codimension-zero domain $W\subset B$ with
$\pi(C)\subset\operatorname{int}W$ and a boundary collar disjoint
from $\pi(C)$. Such a domain is a suitable regular sublevel set of
a smooth proper exhaustion; whole closed components may be included.
A horizontal distribution lifts the normal coordinate vector field
of the collar. Its flow yields a smooth bundle identification
\[
 \Phi:E|_{\partial W}\times[0,\varepsilon)\longrightarrow
                         E|_{\mathrm{collar}}.
\]
The flow exists uniformly on a smaller collar because the fibre and
$\partial W$ are compact. 

For $y\in\partial W$, let $k_{y,t}=\Phi_t^*h_{(y,t)}$ be the induced
fibre metric on $E_y$. Choose a smooth function $\varphi$ on the collar
interval that equals zero near $0$, equals $t$ near the inner edge,
and satisfies $0\le\varphi(t)\le t$. Replace the vertical metric on
the collar by $h'$ determined by
\begin{equation}\label{eq:collar-metric}
 \Phi_t^*h'_{(y,t)}=k_{y,\varphi(t)}.
\end{equation}
Every new fibre is isometric to the original fibre over
$(y,\varphi(t))$. In particular, any uniform intrinsic scalar or
spectral bound is unchanged. Near the boundary this metric is constant
in $t$ in the coordinates supplied by $\Phi$, and near the inner
edge it agrees with the original metric.

Double $W$ to obtain $\widehat B$ and glue two copies of $E|_W$ using
$\Phi$ along the boundary bundle. The product collar charts give a
smooth sphere bundle $\widehat E\to\widehat B$. Reflect $h'$ to the
second half; it is smooth at the seam by \eqref{eq:collar-metric}.
Give the second half the opposite total orientation. This uses only
the orientation of $E$, not separate orientations of $B$ and $T_\pi E$.

On the positive half put the original $(V,\nabla)$. On the negative
half put a trivial bundle of the same rank with its trivial connection.
Use the specified frame $\tau$ to glue them. In this frame both
connection matrices are zero throughout a collar, so the connection
is smooth and creates no curvature at the seam. The frame may have
nontrivial homotopy class and need not extend to the interior of either
half. All nonzero curvature stays in the positive-half set $C$, where
the vertical metric is unchanged. Hence the vertical norm is preserved.

Choose any total metric $\widehat g_0$ realizing the doubled vertical
metric. On a neighbourhood of $C$ in the unchanged part of the
positive half, replace it by $g$ using a smooth convex interpolation
supported in that part. Both total metrics have the same vertical
restriction there, so this operation preserves the vertical metric.
The resulting $\widehat g$ agrees with $g$ wherever curvature can be
nonzero and proves \eqref{eq:double-norms}. 

Every positive-degree Chern--Weil monomial vanishes on the negative
half and near the seam, while on the positive half it is the original
form with the original orientation. Therefore
\[
 \int_{\widehat E}P(F_{\widehat\nabla})
 =\int_{E|_W}P(F_\nabla)=\int_E P(F_\nabla).
\]
Using a second copy of the original connection would instead cancel
these integrals, which is why the negative half was chosen flat.
If a component of $W$ is closed, its double consists of two disjoint
copies with the same choices of connections. Only finitely many
components meet $C$. If $C$ is empty, all detecting forms already
vanish and the same construction can be made over any compact domain.
\end{proof}

\begin{proof}[Proof of Theorem~\ref{thm:spectral}]
Suppose first that $E$ is even-dimensional and take an admissible
$(V,\nabla)$. If
\[
 \|F_\nabla\|_g<\frac{2\lambda_0}{n(n-1)},
\]
then the same strict inequality holds for its vertical norm.
If $E$ is compact, Proposition~\ref{prop:closed-spectral} makes every
top-degree Chern monomial zero, since $\dim E=\dim B+n>\dim B$.
If $E$ is noncompact, Lemma~\ref{lem:double} produces a closed sphere
bundle with the same lower spectral bound, curvature norms, and every
detecting integral. Apply the proposition on this double and use
\eqref{eq:double-integral}. In both cases all detecting numbers
vanish, contradicting admissibility.
Thus every admissible connection has norm at least
$2\lambda_0/[n(n-1)]$, and the $2\pi$ normalization gives
\eqref{eq:spectral-bound}.

For odd-dimensional $E$, apply this argument to
$E\times\R\to B\times\R$ with total metric $g+\dd t^2$.
The fibre metrics and their spectra are unchanged. Each admissible
bundle on $E\times\R$ has its own compact exceptional set and frame;
the double is constructed for that bundle on a compact domain in
$B\times\R$. No assumption that it is pulled back from $E$ is made.
Definition~\ref{def:karea} gives the same bound.
Finally $\operatorname{Sc}_{h_b}\ge\sigma$ implies
$\lambda(b)\ge\sigma/4$ by the Rayleigh quotient, proving
\eqref{eq:scalar-bound}.
\end{proof}

\section{A conformal spectral strengthening}\label{sec:conformal}

For $n\ge3$, the conformal Laplacian gives a stronger estimate.
For a metric $h$ on a closed $n$-manifold, define
\begin{equation}\label{eq:conformal-operator}
 Q_h=\frac{n-1}{n-2}\Delta_h+\frac{\operatorname{Sc}_h}{4},
 \qquad \nu(h)=\lambda_{\min}(Q_h).
\end{equation}
The operator $4Q_h$ is the conformal Laplacian. Its relation to the
spectrum of the untwisted spin Dirac operator is classical
\cite{Hijazi1986}; here we also control the twisting curvature term.

\begin{theorem}\label{thm:conformal}
Under the sphere-bundle and total-metric hypotheses of
Theorem~\ref{thm:spectral}, suppose $n\ge3$ and
$\nu(h_b)\ge\nu_0>0$ for every $b\in B$. Then
\begin{equation}\label{eq:conformal-bound}
 \Karea(E,g)\le\frac{\pi n(n-1)}{\nu_0}.
\end{equation}
\end{theorem}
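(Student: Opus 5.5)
The plan is to repeat the proof of Theorem~\ref{thm:spectral} word for word, except at one point: the fibrewise gap estimate \eqref{eq:gap}. There, the scalar Kato inequality will be replaced by the refined Kato inequality for harmonic spinors together with a Hijazi-type power substitution. The point to notice is that Lemma~\ref{lem:two-indices} only uses \emph{invertibility} of the untwisted and twisted vertical Dirac families, not a quantitative spectral gap. The Brauer base change of Lemma~\ref{lem:brauer}, the return \eqref{eq:brauer-return}, the orientation double cover, the double of Lemma~\ref{lem:double} (which preserves fibre isometry classes, hence $\nu$), and the passage to $E\times\R$ for odd $\dim E$ are all insensitive to which spectral quantity controls the fibres. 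So it suffices to prove the following. If $\nu(h)\ge\nu_0$ on a closed $\Spin^c$ fibre whose determinant curvature vanishes on vertical planes, and $\kappa<2\nu_0/[n(n-1)]$, then $\ker D_{V,b}=0$. Since $D_{V,b}$ is self-adjoint and elliptic on a closed fibre, trivial kernel gives invertibility. Taking $V$ trivial covers the untwisted family as well.

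For the kernel estimate, let $D_{V,b}s=0$, write $f=|s|$, and set $c=n(n-1)\kappa/2$. The first step is the refined Kato inequality $|\dd f|^2\le\frac{n-1}{n}|\nabla s|^2$ away from zeros of $s$. Its standard proof decomposes $\nabla s$ into its Clifford-trace part and its twistor (Penrose) part. This decomposition is purely algebraic in the Clifford action on the first factor of $\mathcal S\otimes V$, so it holds verbatim for the twisted connection. The second step uses the square formula \eqref{eq:dirac-square} and the pointwise bound on the twisting term. With the nonnegative Laplacian,
\[
 \tfrac12\Delta f^2=\langle\nabla^*\nabla s,s\rangle-|\nabla s|^2
 \le-\Bigl(\tfrac{\operatorname{Sc}}4-c\Bigr)f^2-|\nabla s|^2 .
\]
Combining this with $\tfrac12\Delta f^2=f\Delta f-|\dd f|^2$ and the refined Kato inequality gives
\[
 f\Delta f\le-\tfrac1{n-1}|\dd f|^2-\Bigl(\tfrac{\operatorname{Sc}}4-c\Bigr)f^2 .
\]
The third step puts $u=f^a$ with $a=(n-2)/(n-1)$. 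Multiplying the last inequality by $f^{2a-2}$ and integrating by parts, the $|\dd f|^2$ terms combine exactly so that
\[
 \int\Bigl(\tfrac{n-1}{n-2}|\dd u|^2+\tfrac{\operatorname{Sc}}4u^2\Bigr)\le c\int u^2 .
\]
This is the step that uses $n\ge3$. The Rayleigh quotient of $Q_h$ then gives $(\nu_0-c)\|u\|^2\le0$. Hence $u=0$, so $s=0$.

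I expect the main obstacle to be the justification at the zero set of $s$, where $f$ is only Lipschitz and $f^{2a-2}$ is singular. I would first work with $f_\varepsilon=(|s|^2+\varepsilon^2)^{1/2}$, as in the proof of Proposition~\ref{prop:closed-spectral}. The refined Kato inequality holds for $f_\varepsilon$ with the same constant, since $|\dd f_\varepsilon|\le|\dd f|\cdot f/f_\varepsilon$. I would then run the computation with $u_\varepsilon=f_\varepsilon^a$. The error terms are of size $\varepsilon^2\int f_\varepsilon^{2a-2}$ multiplied by the bounded potential, and they tend to zero. Finally I pass to the $H^1$ limit, using that $u_\varepsilon$ is uniformly bounded in $H^1$ on the closed fibre.

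With invertibility established, the constant is obtained exactly as in the proof of Theorem~\ref{thm:spectral}. Every admissible pair satisfies $\|F_\nabla\|_g\ge\kappa\ge2\nu_0/[n(n-1)]$, and the $2\pi$ normalization yields \eqref{eq:conformal-bound}.
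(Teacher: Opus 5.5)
Your proposal is correct, but the analytic step takes a genuinely different route from the paper's; the surrounding reduction (Lemma~\ref{lem:two-indices} needing only invertibility, base change, double cover, doubling, $E\times\R$) matches it.

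The paper takes a positive ground state $v$ of $Q_h$ on each fibre and passes to $h'=v^{4/(n-2)}h$. For $h'$ one has $\operatorname{Sc}_{h'}/4=\nu(h)v^{-4/(n-2)}$, and the comass of $F_\nabla$ scales by the same factor. So the zero-order part of the twisted Lichnerowicz formula for $h'$ is pointwise positive, and $\ker D_{h',V}=0$. Conformal covariance of the twisted Dirac operator then carries this back to $\ker D_{h,V}=0$.

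You stay in the original metric. You insert $|s|^{(n-2)/(n-1)}$ into the Rayleigh quotient of $Q_h$, using the refined Kato inequality. Your exponent is exactly the maximizer of $(2a-\frac{n-2}{n-1})/a^2$, which is why the coefficient $\frac{n-1}{n-2}$ of the conformal Laplacian appears. I checked that the integration by parts closes as you state.

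What each route buys:
\begin{itemize}
\item Your route avoids the ground-state and strong maximum principle arguments and the conformal transformation law for scalar curvature.
\item Above all, it avoids the conformal covariance formula for the twisted $\Spin^c$ Dirac operator, which the paper must check separately for the twisting and determinant connections.
\item The price is the refined Kato inequality, which is purely algebraic and so holds verbatim for the twisted operator.
\item You also need a regularization at the zero set of $s$. The paper's pointwise-positive argument needs no singular weights.
\end{itemize}

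Your regularization works. Since $f_\varepsilon\ge\varepsilon$ and $2a-2<0$, the only error term is bounded by $\|\operatorname{Sc}/4-c\|_\infty\,\varepsilon^{2a}\operatorname{vol}$. You do not even need the $H^1$ limit. Applying the Rayleigh quotient directly to the smooth function $u_\varepsilon$ gives $\int|s|^{2a}\le\int u_\varepsilon^2\le C\varepsilon^{2a}/(\nu_0-c)\to0$.
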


\begin{proof}
We replace the analytic step \eqref{eq:gap}; the topological and
compact-reduction arguments are unchanged. Work first on a closed
fibre with a vertical $\Spin^c$ structure whose determinant connection
is flat along that fibre. Suppose the twisting curvature satisfies
$\kappa<2\nu_0/[n(n-1)]$.
Choose a positive lowest eigenfunction $v$ of $Q_h$, so
$Q_hv=\nu(h)v$. Such a positive eigenfunction exists by the ground-state
and strong maximum principles on a closed connected manifold.
For $h'=v^{4/(n-2)}h$, the conformal scalar-curvature formula gives
\[
 \frac{\operatorname{Sc}_{h'}}4
 =v^{-(n+2)/(n-2)}Q_hv
 =\nu(h)v^{-4/(n-2)}.
\]
The operator comass of the same curvature two-form scales as
\[
 \|F_\nabla(x)\|_{h'}=v(x)^{-4/(n-2)}\|F_\nabla(x)\|_h,
\]
where these are the pointwise comasses.
Consequently the zero-order part of the twisted Dirac square for $h'$
is bounded below by
\begin{equation}\label{eq:conformal-gap}
 v^{-4/(n-2)}\left(\nu(h)-\frac{n(n-1)}2\kappa\right)>0.
\end{equation}
The determinant term is still zero. Thus $D_{h',V}$ has no kernel.

Put $f=v^{2/(n-2)}$ and let $I$ be the usual identification of spinor
bundles for $h'=f^2h$, tensored with the identity on $V$.
The conformal covariance formula is
\begin{equation}\label{eq:conformal-dirac}
 D_{h',V}\bigl(f^{-(n-1)/2}I\psi\bigr)
 =f^{-(n+1)/2}I D_{h,V}\psi.
\end{equation}
For the untwisted formula see \cite[\S2]{Grosse2011}.
It persists for the fixed twisting and determinant connections:
in a local frame their additional Dirac terms are Clifford contraction
with a connection one-form, and the orthonormal-frame change
$e_i\mapsto f^{-1}e_i$ scales that contraction by $f^{-1}$.
There is no additional derivative of the twisting connection.
Equation~\eqref{eq:conformal-dirac} identifies the two kernels, so the
original operator $D_{h,V}$ has no kernel either. The trivial twist
satisfies the same argument with $\kappa=0$.

Only a fibrewise choice of $v$ was used. The original metric defines
a smooth Dirac family whose kernel is now zero on every fibre, so
Lemma~\ref{lem:two-indices} applies to that family. No smooth global
choice of the positive eigenfunctions is needed.
Use Lemma~\ref{lem:brauer} to obtain the specified determinant and
\eqref{eq:brauer-return} to return to $B$. This gives the closed-base
vanishing statement at the threshold $2\nu_0/[n(n-1)]$.
Lemma~\ref{lem:double} preserves $\nu$ because its new fibres are
isometric to original fibres. Its framed gluing and the argument on
$E\times\R$ then prove \eqref{eq:conformal-bound} for all the detecting
bundles in Definition~\ref{def:karea}.
\end{proof}

The Rayleigh quotient gives, on the same fibre,
\[
 \lambda_{\min}(\Delta_h+\operatorname{Sc}_h/4)\le\nu(h).
\]
Thus whenever Theorem~\ref{thm:spectral} applies in dimension at least
three, the conformal estimate applies with a denominator at least as
large. Both spectral quantities equal $n(n-1)/(4R^2)$ on a round
sphere of radius $R$, so the strengthening retains the bound $4\pi R^2$.
The conformal change is used only to establish kernel vanishing;
it gives neither conformal invariance of K-area nor the operator
inequality $D_{h,V}^2\ge\nu_0-n(n-1)\kappa/2$.

The conformal condition also permits some metrics with negative scalar
curvature at a point. On the unit sphere let $z$ be its last coordinate
and set $h_t=e^{2tz}h_{\mathrm{round}}$. At the south pole,
\[
 \operatorname{Sc}_{h_t}=n(n-1)e^{2t}(1-2t)<0\qquad(t>1/2).
\]
Indeed, for our nonnegative Laplacian the conformal transformation law
is
\[
 \operatorname{Sc}_{e^{2u}h}=e^{-2u}\bigl(
 \operatorname{Sc}_h+2(n-1)\Delta_hu
 -(n-1)(n-2)|\dd u|_h^2\bigr).
\]
At that pole $\Delta z=nz=-n$ and $\dd z=0$.
Nevertheless $Q_{h_t}$ is positive: if
$w=e^{(n-2)tz/2}$, conformal covariance gives
\[
 \int_{S^n}\phi Q_{h_t}\phi\,\dd\operatorname{vol}_{h_t}
 =\int_{S^n}(w\phi)Q_{h_{\mathrm{round}}}(w\phi)
                         \,\dd\operatorname{vol}_{h_{\mathrm{round}}}>0
\]
for every nonzero $\phi$.
Since $w$ and the volume ratio are bounded above and below on the
compact sphere, this gives $\nu(h_t)>0$. A product bundle with this
fixed fibre metric satisfies Theorem~\ref{thm:conformal} although its
fibre scalar curvature is not everywhere positive.

\section{Normalization and scope}\label{sec:scope}

The strict threshold in Lemma~\ref{lem:holomorphic} is necessary: a
degree-one line bundle on an oriented two-sphere of area $a$ admits
a unitary connection with
\[
 F_\nabla=-\frac{2\pi i}{a}\,\dd A.
\]
To obtain such a connection, start with any unitary connection; the
difference between its curvature and the displayed form is an exact
imaginary-valued two-form, since their integrals agree, and adjust the
connection by an imaginary-valued one-form. Its first Chern number is
$1$ and its curvature norm is $2\pi/a$. Together with
Theorem~\ref{thm:area}, applied with base a point, this gives
$\Karea(S^2,h)=a$.

In particular, a round two-sphere of radius $R$ has K-area
$4\pi R^2$. More generally, replacing $g$ by $R^2g$ multiplies the
curvature comass by $R^{-2}$ and K-area by $R^2$. For odd total
dimension the same statement follows from \eqref{eq:odd} by a
rescaling of the real-line coordinate. Thus the unit-radius
condition in Corollary~\ref{thm:round} fixes the scale of its constant.

The two-sphere proof gives actual descent of the detecting bundle
after orienting the vertical tangent bundle. The family-index proof
only needs descent of Chern-character classes in real cohomology on
the compact reduction. It does not assert integral K-theoretic descent
or vanishing of torsion. For noncompact spaces, the parallel frame at
infinity is used explicitly in both proofs. Neither proof establishes
the same conclusion for a definition allowing arbitrary nontrivial
flat holonomy at infinity.

\section*{Acknowledgements}
The author thanks Professor Bobo Hua for his support.

\section*{Use of AI tools}
The author used GPT-6 Astra in this work and guided its reasoning.
The author takes full responsibility for the content of the paper.

\bibliographystyle{abbrvnat}
\bibliography{references}
\end{document}